\theoremstyle{plain} 
\newtheorem{theorem}{Theorem}
\theoremstyle{definition} 
\newtheorem{definition}[theorem]{Definition}
\newtheorem{remark}[theorem]{Remark}
\newtheorem{example}[theorem]{Example}
\newcommand{\R}{\ensuremath{\mathbb{R}}}
\newcommand{\T}{\mathcal{H}}
\newcommand{\N}{\ensuremath{\mathbb{N}}}
\newcommand{\C}{\ensuremath{\mathbb{C}}}
\numberwithin{equation}{section}
\numberwithin{theorem}{section}
\begin{document}

\title{Hyers-Ulam Stability For A Type Of Discrete Hill Equation}
\author[Anderson]{Douglas R. Anderson} 
\address{Department of Mathematics \\
         Concordia College \\
         Moorhead, MN 56562 USA\\ 0000-0002-3069-2816}
\email{andersod@cord.edu}
\author[Onitsuka]{Masakazu Onitsuka}
\address{Department of Applied Mathematics \\
Okayama University of Science \\
Okayama, 700-0005, Japan \\ 0000-0001-8598-0746}
\email{onitsuka@ous.ac.jp}

\keywords{stability; periodic; $h$-difference equations; constant step size; Hill's equation.}
\subjclass[2020]{39A10, 34N05, 39A23, 39A45}

\begin{abstract} 
We establish the Hyers-Ulam stability of a second-order linear Hill-type $h$-difference equation with a periodic coefficient. Using results from first-order $h$-difference equations with periodic coefficient of arbitrary order, both homogeneous and non-homogeneous, we also establish a Hyers-Ulam stability constant. Several interesting examples are provided. As a powerful application, we use the main result to prove the Hyers-Ulam stability of a certain third-order $h$-difference equation with periodic coefficients of one form.
\end{abstract}

\maketitle\thispagestyle{empty}

%%%%%%%%%%%%%%%%% 
%               % 
% SECTION Intro % 
%               % 
%%%%%%%%%%%%%%%%%

\section{Literature Survey}
Ulam stability is introduced via a question by Ulam in monograph \cite{ulam}, which is partially answered by Hyers \cite{hyers} and extended by Rassias \cite{rassias}.
In this manner Ulam stability, also known as Hyers--Ulam stability or Hyers--Ulam--Rassias stability, has developed in the context of differential equations, difference equations (recurrences), functional equations, and operators; see  Brillou\"{e}t--Belluot, Brzd\k{e}k, and Ciepli\'{n}ski \cite{brillouet} for a survey of the literature on this topic, and also {Brzd\k{e}k}, Popa, Ra\c{s}a, and Xu \cite{brzdek1}. Honing in on Ulam stability in the discrete setting, see Popa \cite{popa,popa2}, and more recently Andr\'{a}s and M\'{e}sz\'{a}ros \cite{andras} on time scales, Brzd\k{e}k and W\'{o}jcik \cite{brzdek2}, Hua, Li and Feng \cite{hua}, Jung and Nam \cite{jung}, Nam \cite{nam,nam2,nam3}, Shen \cite{shen,shen2}, and Rasouli, Abbaszadeh, and Eshaghi \cite{rasouli}. 

Fukutaka and Onitsuka \cite{fuon1,fuon2,fuon3} explore on a continuous interval the best constant for Hyers--Ulam stability of both first-order homogeneous linear differential equations with a periodic coefficient, and for a type of Hill's differential equation, thus motivating this exploration of a Hill-type equation for difference equations. See also the related papers by C\v{a}dariu, Popa, and Ra\c{s}a \cite{cad}, and Dragi\v{c}evi\'{c} \cite{drag}. In particular, Fukutaka and Onitsuka \cite{fuon1,fuon2,fuon3} consider the second-order equation $y'' = a(t)y$ and its related Hyers-Ulam stability, where the variable coefficient function takes the form $a(t)=\lambda^2(t)-\lambda'(t)$, where $\lambda$ is a continuously differentiable periodic function of period $\omega>0$. One of their main results is that the Hill equation
\begin{equation}\label{1.1}
 y'' = \Big(\lambda^2(t)-\lambda'(t)\Big)y 
\end{equation}
is Hyers-Ulam stable (HUS) if and only if $\int_0^{\omega}\lambda(t)dt\ne 0$. If $\int_0^{\omega}\lambda(t)dt> 0$, then 
$$  K^*:=\max_{t\in(0,\omega]}\left[e^{-\int_0^t \lambda(s)ds} \int_{-\infty}^{t}\left(e^{2\int_0^s \lambda(u)du}\int_s^{\infty}e^{-\int_0^u \lambda(v)dv}du\right)ds\right] $$
is the minimal (best) Hyers-Ulam constant for \eqref{1.1}. Dragi\v{c}evi\'{c} \cite{drag} followed up on this idea with an associated perturbed equation of the form
\begin{equation}\label{1.2}
 y''=b(t)y'+\Big(\lambda^2(t)-\lambda'(t)+c(t)\Big)y +f(t,y,y'), \quad t\in\R.
\end{equation}
In broad terms, he found that \eqref{1.2} is HUS if $b$ and $c$ are small using the supremum norm; if $f$ satisfies a joint Lipschitz condition in both the second and third variable with small enough Lipschitz constant, and also that $\int_0^{\omega}\lambda(t)dt\ne 0$. As regards a discrete version of this context, there is a gap in the literature, as no one to the best of our knowledge has explored a corresponding Hill-type $h$-difference equation, where we propose below to use the approximation
$$ \lambda^2(t)-\lambda'(t) \approx \lambda(t)\lambda(t+h)-\Delta_h \lambda(t), \quad \Delta_h \lambda(t):=\frac{\lambda(t+h)-\lambda(t)}{h}. $$
In this discrete case, $\lambda$ is periodic in the sense that $\lambda(t+\omega)=\lambda(t)$ for all $t\in\{0,h,2h,3h,\ldots\}$. As we shall see, the techniques employed below will not be a mere discretized recapitulation of the continuous case, but ones developed specifically for, and tailored to, the discrete case with step size $h>0$.

With this stated motivation in mind, we will proceed as follows. In Section 2, we introduce a definition of HUS for two related first-order $h$-difference equations with periodic coefficient, one homogeneous and the other inhomogeneous,  and we present some key constants for periodic $\lambda$ based on its periodic values. A previous important result from the literature on the Hyers-Ulam stability of first-order $h$-difference equations with periodic coefficient, including a Hyers-Ulam stability constant, is modified to fit with these two related equations. In Section 3, a possible Hill-type $h$-difference equation is introduced along the lines outlined above. The main result of this work is presented in this section, namely the conditions under which the proposed discrete Hill equation is Hyers-Ulam stable, and what the Hyers-Ulam stability constant is, for both the second-order homogeneous and inhomogeneous cases. In Section 4, we explore in depth two non-trivial examples, one with a specified period-3 (3-cycle) coefficient, and one with a general period-2 coefficient. Details for both are provided, including HUS constants. In Section 5, we use our robust results from the discrete second-order case to develop wholly new results for certain discrete third-order Hill equations. No similar results for the continuous case exist as far as we know, lending further novelty to this work. A discrete third-order example that utilizes and builds upon an example from Section 4 and the results from Section 5 is provided. A brief concluding remark constitutes Section 6.

%%%%%%%%%%%%%%%%%%%%%%
% Section: 1st-order %
%%%%%%%%%%%%%%%%%%%%%%

\section{First-order periodic coefficient}

Let $h>0$, and define the uniformly discrete set $\T:=\{0,h,2h,3h,\ldots\}$. Define the discrete exponential function via
\begin{equation}\label{epdef}
 e_\lambda(t):=\prod_{k=0}^{\frac{t-h}{h}}\left(1+h\lambda(kh)\right), \quad\text{where}\quad \prod_{k=0}^{-1}f(k)\equiv 1.
\end{equation}
In this section we utilize a coefficient function of arbitrary finite period to study the Hyers-Ulam stability (defined below) of the first-order linear homogeneous difference equation with $n$-cycle (period $n$) coefficient
\begin{equation}\label{neq}
 \Delta_hx(t) - \lambda(t) x(t) = 0, \qquad \Delta_h x(t):=\frac{x(t+h)-x(t)}{h}, 
\end{equation}
where $n\in\N$, $\lambda:\T\rightarrow\R$ is given by
\begin{equation}\label{n-cycle}
 \lambda(t):= \lambda_k \quad\text{if}\quad \frac{t}{h}\equiv k\mod n
\end{equation}
for $k\in\{0,1,\ldots,n-1\}$, and $\lambda_0,\lambda_1,\ldots,\lambda_{n-1}\in\R\backslash\{\frac{-1}{h}\}$ such that the coefficient function $\lambda$ is periodic with period $n$, and $\lambda$ is not periodic for any $k<n$.
It is a routine exercise to check that the discrete exponential function $e_\lambda$ in \eqref{epdef} satisfies \eqref{neq}, with $e_\lambda(0)=1$.
Let $\lambda_0,\lambda_1,\ldots,\lambda_{n-1}\in\C\backslash\{\frac{-1}{h}\}$. 
For convenience, note that
\[ |e_\lambda(kh)|=|1+h\lambda_0| |1+h\lambda_1| \cdots |1+h\lambda_{k-1}|, \quad k\in\{1,2,\ldots,n\}. \]

% Definition 2.1 %

\begin{definition}[Hyers-Ulam Stability]\label{def2.1}
Equation \eqref{neq} has Hyers-Ulam stability if and only if there exists a constant $K>0$ with the following property: For arbitrary $\varepsilon>0$, if a function $\psi:\T\rightarrow\R$ satisfies $|\Delta_h\psi(t)-\lambda(t)\psi(t)|\le\varepsilon$ for all $t\in\T$, then there exists a solution $x:\T\rightarrow\R$ of \eqref{neq} such that $|\psi(t)-x(t)|\le K\varepsilon$ for all $t\in\T$. 
Such a constant $K$ is called a Hyers-Ulam stability constant for \eqref{neq} on $\T$.
\end{definition}

Note that if $|e_\lambda(nh)|=1$, then \eqref{neq} is not Hyers-Ulam stable \cite[Remark 4.1]{aor}. 

% Remark 2.2 %

\begin{remark}
Assume the coefficient function $\lambda$ satisfies \eqref{n-cycle} for $\lambda_0,\lambda_1,\ldots,\lambda_{n-1}\in\R\backslash\{\frac{-1}{h}\}$. Let 
\begin{equation}\label{sum0}
 S_0=S_0(\lambda) =\frac{1}{|1+h\lambda_0|} + \frac{1}{|1+h\lambda_0||1+h\lambda_{1}|} + \cdots + \frac{1}{|1+h\lambda_0||1+h\lambda_1|\cdots|1+h\lambda_{n-1}|} 
\end{equation}
and
\begin{eqnarray}
	S_k &=& S_k(\lambda) \nonumber \\
	&=& \frac{1}{|1+h\lambda_k|} + \frac{1}{|1+h\lambda_k||1+h\lambda_{k+1}|} + \cdots \nonumber \\
	& & + \frac{1}{|1+h\lambda_k|\cdots|1+h\lambda_{n-1}|} +  \frac{1}{|1+h\lambda_k|\cdots|1+h\lambda_{n-1}||1+h\lambda_0|} + \cdots \nonumber \\
	& & + \frac{1}{|1+h\lambda_k|\cdots|1+h\lambda_{n-1}||1+h\lambda_0|\cdots|1+h\lambda_{k-1}|} \label{sumk}
\end{eqnarray}
for $k\in\{1,\ldots,n-1\}$. We will refer to these sums in the following theorem, which is \cite[Theorem 4.4]{aor}. 
\end{remark} 

% Theorem 2.3 %

\begin{theorem}\label{thm2.3}
Assume the coefficient function $\lambda$ satisfies \eqref{n-cycle} for $\lambda_0,\lambda_1,\ldots,\lambda_{n-1}\in\R\backslash\{\frac{-1}{h}\}$, with 
$0 < |e_\lambda(nh)| \ne 1$. Let $S_0=S_0(\lambda)$ and $S_k=S_k(\lambda)$ for $k\in\{1,2,\ldots,n-1\}$ be given by \eqref{sum0} and \eqref{sumk}, respectively. 
Then, \eqref{neq} has Hyers-Ulam stability on $\T$, with Hyers-Ulam stability constant 
\begin{equation}\label{nKmax0}
   K_0(\lambda):=\frac{h|e_\lambda(nh)|}{\left|1-|e_\lambda(nh)|\right|}\max\left\{S_0,S_1,\ldots,S_{n-1}\right\}
\end{equation}
 on $\T$. Moreover, if $|e_\lambda(nh)|>1$, then $K_0$ is the minimum Hyers-Ulam stability constant for \eqref{neq}. 
\end{theorem}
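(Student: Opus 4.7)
My strategy is to combine the discrete variation-of-constants formula for \eqref{neq} with the $n$-periodicity of $\lambda$ to collapse the resulting tail into a geometric series whose common ratio is $|e_\lambda(nh)|^{\pm 1}$ and whose first-period contribution reduces to $S_k$. Starting from the hypothesis $|\Delta_h\psi(t)-\lambda(t)\psi(t)|\le\varepsilon$, I would set $p(t):=\Delta_h\psi(t)-\lambda(t)\psi(t)$, so that $|p|\le\varepsilon$ and
\[
\psi(mh)=e_\lambda(mh)\psi(0)+h\,e_\lambda(mh)\sum_{s=0}^{m-1}\frac{p(sh)}{e_\lambda((s+1)h)},\qquad m\ge 1.
\]
Since every solution of \eqref{neq} has the form $x(t)=Ce_\lambda(t)$, the task reduces to choosing $C$ wisely and bounding $|\psi-x|$.

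When $|e_\lambda(nh)|>1$ the weights $1/|e_\lambda|$ decay geometrically per period, so $C:=\psi(0)+h\sum_{s=0}^{\infty}p(sh)/e_\lambda((s+1)h)$ is well-defined and yields the backward tail
\[
\psi(mh)-x(mh)=-h\,e_\lambda(mh)\sum_{s=m}^{\infty}\frac{p(sh)}{e_\lambda((s+1)h)}.
\]
Setting $k:=m\bmod n$, each ratio $|e_\lambda(mh)|/|e_\lambda((s+1)h)|$ unwinds into the reciprocal of a product of consecutive factors $|1+h\lambda_{(k+j)\bmod n}|$; regrouping $s$ in blocks of length $n$ turns the tail into $S_k\sum_{r\ge 0}|e_\lambda(nh)|^{-r}=S_k\cdot|e_\lambda(nh)|/(|e_\lambda(nh)|-1)$, so $|\psi(mh)-x(mh)|\le K_0(\lambda)\varepsilon$ after maximizing over $k$. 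When $0<|e_\lambda(nh)|<1$ I would instead take $C:=\psi(0)$ and treat the forward finite sum by the same blockwise regrouping; here the first-block sum takes the form $\tilde S_{k'}$ with $k':=(m-1)\bmod n$, which I would identify via a telescoping identity with $|e_\lambda(nh)|\,S_{(k'+1)\bmod n}$, and the geometric series $\sum_{r\ge 0}|e_\lambda(nh)|^r=1/(1-|e_\lambda(nh)|)$ completes the bound. In both subcases the product of factors collapses to $h|e_\lambda(nh)|/|1-|e_\lambda(nh)||$ times $S_k$, producing the unified constant $K_0(\lambda)$.

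For optimality when $|e_\lambda(nh)|>1$, I would saturate every triangle inequality by choosing $p(sh):=\varepsilon\,\sgn(1/e_\lambda((s+1)h))$ and evaluating at an index $m$ whose residue $k:=m\bmod n$ maximizes $S_k$. Because $|e_\lambda(nh)|>1$ forces $|e_\lambda(t)|\to\infty$, any competing constant $\widetilde C\ne C$ makes $(C-\widetilde C)e_\lambda(t)$ unbounded, so the $x$ built above is the unique bounded comparison and the bound $K_0(\lambda)\varepsilon$ is attained exactly. The chief obstacle I expect is the careful tracking of the cyclic shift $k\mapsto(k\pm j)\bmod n$ across both sign cases, and verifying the identity $\tilde S_{k'}=|e_\lambda(nh)|\,S_{(k'+1)\bmod n}$ that reconciles the forward and backward block-sum computations into the single expression $K_0(\lambda)$.
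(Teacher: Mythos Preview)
The paper does not give its own proof of this theorem; it is imported verbatim as \cite[Theorem~4.4]{aor}, so there is no in-paper argument to compare against. Your plan is the standard variation-of-constants route and is correct in outline. In the case $|e_\lambda(nh)|>1$, the tail $\sum_{s\ge m}|e_\lambda(mh)|/|e_\lambda((s+1)h)|$ splits into $n$-blocks whose first block is exactly $S_k$ with $k=m\bmod n$, and successive blocks pick up factors $|e_\lambda(nh)|^{-1}$, giving $S_k\,|e_\lambda(nh)|/(|e_\lambda(nh)|-1)$ on the nose. In the case $|e_\lambda(nh)|<1$ with $C=\psi(0)$, re-indexing by $j=m-1-s$ produces blocks with ratio $|e_\lambda(nh)|$, and your identity $\tilde S_{k'}=|e_\lambda(nh)|\,S_{(k'+1)\bmod n}$ is correct: multiplying each summand of $S_{(k'+1)\bmod n}$ by the full-period product $|e_\lambda(nh)|$ and reading the resulting partial products in reverse order yields precisely the ``backward'' block sum $\tilde S_{k'}$. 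The partial block near $s=0$ is dominated by a full block, so the finite sum is bounded by the geometric series, and the two cases merge into $K_0(\lambda)$ as stated.

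Your optimality argument for $|e_\lambda(nh)|>1$ is also sound. With $p(sh)=\varepsilon\,\sgn\bigl(1/e_\lambda((s+1)h)\bigr)$ every term $p(sh)/e_\lambda((s+1)h)$ is positive, so for any $m$ with $m\bmod n$ realising $\max_k S_k$ the tail equals $K_0(\lambda)\varepsilon/h$ exactly; divergence of $|e_\lambda|$ forces the comparison solution built from $C$ to be the unique one with $\psi-x$ bounded, whence $\inf_x\sup_t|\psi(t)-x(t)|=K_0(\lambda)\varepsilon$. The only labour left is the cyclic-index bookkeeping you flag, which is routine.
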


Additionally, consider the related equation
\begin{equation}\label{neq-nonhomo}
 \Delta_h y(t) + \lambda(t) y(t) = f(t), \qquad t\in\T,
\end{equation}
where we have modified \eqref{neq} by replacing $\lambda$ with $-\lambda$, and by adding in the non-homogeneous term $f$.

% Definition 2.4 %

\begin{definition}[Hyers-Ulam Stability]\label{def2.4}
Equation \eqref{neq-nonhomo} has Hyers-Ulam stability if and only if there exists a constant $L>0$ with the following property: For arbitrary $\varepsilon>0$, if a function $\eta:\T\rightarrow\R$ satisfies $\left|\Delta_h\eta(t)+\lambda(t)\eta(t)-f(t)\right|\le\varepsilon$ for all $t\in\T$, then there exists a solution $y:\T\rightarrow\R$ of \eqref{neq-nonhomo} such that $|\eta(t)-y(t)|\le L\varepsilon$ for all $t\in\T$. 
Such a constant $L$ is called a Hyers-Ulam stability constant for \eqref{neq-nonhomo} on $\T$.
\end{definition}

In the following theorem, we derive a new result using Definition \ref{def2.4} that modifies the preceding Theorem \ref{thm2.3} based on Definition \ref{def2.1}, by changing $\lambda$ to $-\lambda$, and by introducing a non-homogeneous term $f$ to the right-hand side of equation \eqref{neq}.

% Theorem 2.5 %

\begin{theorem}\label{thm2.5}
Assume the coefficient function $-\lambda$ satisfies \eqref{n-cycle} for $\lambda_0,\lambda_1,\ldots,\lambda_{n-1}\in\R\backslash\{\frac{1}{h}\}$, with 
$0 < |e_{-\lambda}(nh)| \ne 1$. Let $S_0(-\lambda)$ and $S_k(-\lambda)$ for $k\in\{1,2,\ldots,n-1\}$ be given by \eqref{sum0} and \eqref{sumk}, respectively, with $\lambda$ in those expressions replaced by $-\lambda$. 
Then, \eqref{neq-nonhomo} has Hyers-Ulam stability on $\T$, with Hyers-Ulam stability constant 
\begin{equation}\label{nKmax0-neg}
   K_0(-\lambda):=\frac{h|e_{-\lambda}(nh)|}{\left|1-|e_{-\lambda}(nh)|\right|}\max\left\{S_0(-\lambda),S_1(-\lambda),\ldots,S_{n-1}(-\lambda)\right\}
\end{equation}
 on $\T$. Moreover, if $|e_{-\lambda}(nh)|>1$, then $K_0(-\lambda)$ is the minimum Hyers-Ulam stability constant for \eqref{neq-nonhomo}. 
\end{theorem}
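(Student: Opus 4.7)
The plan is to reduce Theorem \ref{thm2.5} directly to Theorem \ref{thm2.3} by subtracting off a particular solution. Because \eqref{neq-nonhomo} is a first-order linear $h$-difference equation with $1 + h(-\lambda(t)) \ne 0$ on $\T$, standard variation-of-parameters produces a particular solution $y_p:\T\rightarrow\R$ satisfying $\Delta_h y_p(t) + \lambda(t) y_p(t) = f(t)$; one explicit form uses the discrete exponential $e_{-\lambda}$ from \eqref{epdef}.

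Given any $\eta$ with $|\Delta_h\eta(t) + \lambda(t)\eta(t) - f(t)| \le \varepsilon$, I would set $\psi := \eta - y_p$, so that a direct computation yields
\[ \Delta_h\psi(t) - (-\lambda(t))\psi(t) = \Delta_h\eta(t) + \lambda(t)\eta(t) - f(t), \]
hence $|\Delta_h\psi(t) - (-\lambda(t))\psi(t)| \le \varepsilon$ on $\T$. By hypothesis, $-\lambda$ meets every assumption of Theorem \ref{thm2.3} (the condition $\lambda_j \ne -1/h$ there becomes $\lambda_j \ne 1/h$ here, matching the statement), so that theorem supplies a solution $x$ of $\Delta_h x - (-\lambda)x = 0$ with $|\psi(t) - x(t)| \le K_0(-\lambda)\varepsilon$. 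Setting $y := y_p + x$, one verifies $\Delta_h y + \lambda y = f$ and $|\eta(t) - y(t)| = |\psi(t) - x(t)| \le K_0(-\lambda)\varepsilon$, establishing HUS for \eqref{neq-nonhomo} with constant $K_0(-\lambda)$.

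For the minimality claim when $|e_{-\lambda}(nh)| > 1$, suppose $L$ is any HUS constant for \eqref{neq-nonhomo}. Specializing the inequality in Definition \ref{def2.4} to $f \equiv 0$ recovers exactly Definition \ref{def2.1} for the equation $\Delta_h x + \lambda x = 0$, which is \eqref{neq} with coefficient $-\lambda$. Thus $L$ is also a HUS constant for that homogeneous equation, and the minimality clause of Theorem \ref{thm2.3} forces $L \ge K_0(-\lambda)$.

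No serious obstacle is anticipated: the argument is essentially a change of variables, and all the substantive work (the asymptotics that produce the sums $S_k$ and the factor $|e_\lambda(nh)|$) has already been done in Theorem \ref{thm2.3}. The only subtle point is ensuring the sign flip $\lambda \mapsto -\lambda$ propagates consistently through each quantity $S_k(-\lambda)$ and $|e_{-\lambda}(nh)|$, which is a bookkeeping matter rather than a conceptual one.
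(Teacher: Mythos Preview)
Your reduction is essentially identical to the paper's: both subtract a particular solution of \eqref{neq-nonhomo} (the paper calls it $u$, you call it $y_p$) to land on the homogeneous equation $\Delta_h x - (-\lambda)x = 0$ and then invoke Theorem~\ref{thm2.3}. You additionally sketch the minimality assertion, which the paper's proof omits; the idea is right, but since $f$ is fixed in \eqref{neq-nonhomo} you cannot literally ``specialize to $f\equiv 0$''---instead, the same translation $\psi\mapsto\psi+y_p$ (and $y\mapsto y-y_p$ on solutions) shows that any HUS constant $L$ for \eqref{neq-nonhomo} is already a HUS constant for the homogeneous equation with coefficient $-\lambda$, after which the minimality clause of Theorem~\ref{thm2.3} forces $L\ge K_0(-\lambda)$.
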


\begin{proof}
Let $u$ be a solution of \eqref{neq-nonhomo} on $\T$, and let $\eta:\T\rightarrow\R$ be a function such that 
$|\Delta_h\eta(t)+\lambda(t)\eta(t)-f(t)|\le\varepsilon$ for all $t\in\T$. Then, using $u$ and $\eta$, we have
\begin{eqnarray*}
 \varepsilon &\ge& \left|\Delta_h\eta(t)+\lambda(t)\eta(t)-f(t)\right| \\
 &=& \left|\Delta_h\eta(t)+\lambda(t)\eta(t)-\left(\Delta_h u(t) + \lambda(t) u(t) \right)\right| \\
 &=& \left|\Delta_h(\eta-u)(t) + \lambda(t)(\eta-u)(t)\right| \\
 &=& \left|\Delta_h(\eta-u)(t) - (-\lambda)(t)(\eta-u)(t)\right|.
\end{eqnarray*}
As in Definition \ref{def2.1}, we have that the function $\psi=(\eta-u)$ satisfies $\left|\Delta_h \psi(t) - (-\lambda)(t)\psi(t)\right|\le\varepsilon$. By Theorem \ref{thm2.3} with $\lambda$ replaced by $-\lambda$, equation \eqref{neq} is Hyers-Ulam stable. Therefore, from Definition \ref{def2.1}, there exists a solution $v:\T\rightarrow\R$ of \eqref{neq}  with $\lambda$ replaced by $-\lambda$, namely $\Delta_h v(t)+\lambda(t)v(t)=0$, such that
\[ \left|\psi(t)-v(t)\right| = \left|(\eta-u)(t)-v(t)\right|\le K_0(-\lambda)\varepsilon. \]
Now let $y(t)=(u+v)(t)$. Then, 
\[ \Delta_h y(t)+\lambda(t) y(t) = \Delta_h (u+v)(t)+\lambda(t)(u+v)(t) = f(t), \]
so that $y$ is a solution of \eqref{neq-nonhomo}, and $|\eta(t)-y(t)|\le K_0(-\lambda)\varepsilon$ for all $t\in\T$. By Definition \ref{def2.4}, equation \eqref{neq-nonhomo} has Hyers-Ulam stability with Hyers-Ulam stability constant $L=K_0(-\lambda)$.
\end{proof}

%%%%%%%%%%%%%
% Section 3 %
%%%%%%%%%%%%%

\section{A possible Hill-type discrete equation}

Using as a foundation the previous section, we introduce completely new results for the remainder of this work.
For given constant step-size $h>0$, define the forward difference operator via
\[ \Delta_h y(t) = \frac{y(t+h)-y(t)}{h} \quad\text{and}\quad \Delta^2_h y(t) = \Delta_h(\Delta_h y(t)), \]
and again let $\T=\{0, h, 2h, \ldots\}$.
Now, consider the discrete Hill-type $h$-difference equation
\begin{equation}\label{hilleq}
 \Delta^2_h y(t) + \left[\Delta_h \lambda(t) - \lambda(t)\lambda(t+h)\right]y(t) = 0,
\end{equation}
where $\lambda:\T\rightarrow\R$ and $\lambda(t+\omega)=\lambda(t)$ for some $\omega\in\T$.

\begin{definition}[Hyers-Ulam Stability]\label{def3.1}
Equation \eqref{hilleq} has Hyers-Ulam stability if and only if there exists a constant $K>0$ with the following property: For arbitrary $\varepsilon>0$, if a function $\xi:\T\rightarrow\R$ satisfies $\left|\Delta^2_h \xi(t) + \left[\Delta_h \lambda(t) - \lambda(t)\lambda(t+h)\right]\xi(t)\right|\le\varepsilon$ for all $t\in\T$, then there exists a solution $y:\T\rightarrow\R$ of \eqref{hilleq} such that $|\xi(t)-y(t)|\le K\varepsilon$ for all $t\in\T$. 
Such a constant $K$ is called a Hyers-Ulam stability constant for \eqref{hilleq} on $\T$.
\end{definition}

Note that if $\lambda(t)\equiv \lambda\in\left(0,\frac{1}{h}\right)\cup\left(\frac{1}{h},\frac{2}{h}\right)\cup\left(\frac{2}{h},\infty\right)$, then \eqref{hilleq} has the form
\[ \Delta^2_h y(t) - \lambda^2y(t) = 0, \]
which is Hyers-Ulam stable \cite[Theorem 3.4]{andon} with best Hyers-Ulam stability constant
\[ K=\begin{cases} \dfrac{1}{\lambda^2} &\text{if}\quad \lambda\in\left(0,\frac{1}{h}\right) \\ \dfrac{h}{\lambda|2-h\lambda|} &\text{if}\quad \lambda\in\left(\frac{1}{h},\frac{2}{h}\right)\cup\left(\frac{2}{h},\infty\right). \end{cases} \]

% Theorem 3.2 %

\begin{theorem}[Main Result]\label{thm3.2}
Assume the coefficient function $\lambda$ satisfies \eqref{n-cycle} for $\lambda_0,\lambda_1,\ldots,\lambda_{n-1}\in\R\backslash\{\pm\frac{1}{h}\}$, with 
$0 < |e_{\pm \lambda}(nh)| \ne 1$, where the discrete exponential function is given in \eqref{epdef}. Let $S_0=S_0(\pm\lambda)$ and $S_k=S_k(\pm\lambda)$ for $k\in\{1,2,\ldots,n-1\}$ be given by \eqref{sum0} and \eqref{sumk}, respectively. Additionally, let $K_0(\pm\lambda)$ be as in \eqref{nKmax0} and \eqref{nKmax0-neg}, respectively. Then, \eqref{hilleq} has Hyers-Ulam stability on $\T$, with Hyers-Ulam stability constant $K=K_0(\lambda)K_0(-\lambda)$.
\end{theorem}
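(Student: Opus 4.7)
The plan is to factor the second-order discrete Hill operator as a composition of two first-order operators in succession and then apply Theorems \ref{thm2.3} and \ref{thm2.5} one after the other. The key identity, which I would verify first by a direct computation using the discrete product rule $\Delta_h[\lambda(t)\xi(t)] = \lambda(t+h)\Delta_h\xi(t) + \xi(t)\Delta_h\lambda(t)$, is
\[
\Delta_h\bigl(\Delta_h\xi(t)+\lambda(t)\xi(t)\bigr) - \lambda(t+h)\bigl(\Delta_h\xi(t)+\lambda(t)\xi(t)\bigr)
= \Delta_h^2\xi(t) + \bigl[\Delta_h\lambda(t)-\lambda(t)\lambda(t+h)\bigr]\xi(t).
\]
In operator form, writing $\tilde\lambda(t):=\lambda(t+h)$, this says that the Hill operator equals $(\Delta_h-\tilde\lambda)\circ(\Delta_h+\lambda)$.

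Next, I would set $\eta(t):=\Delta_h\xi(t)+\lambda(t)\xi(t)$. If $\xi$ satisfies the Hill-type inequality with bound $\varepsilon$, the factorization identity gives $|\Delta_h\eta(t)-\tilde\lambda(t)\eta(t)|\le\varepsilon$ for all $t\in\T$. Since $\tilde\lambda$ takes the same periodic values as $\lambda$ but cyclically shifted by one, a quick bookkeeping check shows $e_{\tilde\lambda}(nh)=e_\lambda(nh)$ and $\{S_k(\tilde\lambda)\}_{k=0}^{n-1}$ is a permutation of $\{S_k(\lambda)\}_{k=0}^{n-1}$, so $K_0(\tilde\lambda)=K_0(\lambda)$. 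Applying Theorem \ref{thm2.3} to the equation $\Delta_h v-\tilde\lambda(t) v=0$, I obtain a solution $v:\T\to\R$ with $|\eta(t)-v(t)|\le K_0(\lambda)\varepsilon$.

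The rewrite $\eta-v=\Delta_h\xi+\lambda\xi-v$ then yields $|\Delta_h\xi(t)+\lambda(t)\xi(t)-v(t)|\le K_0(\lambda)\varepsilon$, which is exactly the hypothesis of Definition \ref{def2.4} for equation \eqref{neq-nonhomo} with forcing $f=v$ and error $K_0(\lambda)\varepsilon$. Applying Theorem \ref{thm2.5} produces a solution $y:\T\to\R$ of $\Delta_h y+\lambda y=v$ with $|\xi(t)-y(t)|\le K_0(-\lambda)\bigl(K_0(\lambda)\varepsilon\bigr)$. Finally, applying the factorization identity in reverse to $y$ shows
\[
\Delta_h^2 y(t)+[\Delta_h\lambda(t)-\lambda(t)\lambda(t+h)]y(t)
= \Delta_h v(t)-\tilde\lambda(t)v(t)=0,
\]
so $y$ indeed solves \eqref{hilleq}, and Hyers-Ulam stability with constant $K_0(\lambda)K_0(-\lambda)$ follows from Definition \ref{def3.1}.

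The main obstacle, conceptually, is spotting the correct factorization with the \emph{shifted} coefficient $\tilde\lambda$ on the outer operator rather than $\lambda$ itself; the discrete product rule forces this shift, and getting it wrong would break the identity. Once that is in hand, the remaining work is routine: confirming that the cyclic shift does not change $K_0$, and checking that the hypotheses $\lambda_k\ne\pm 1/h$ and $0<|e_{\pm\lambda}(nh)|\ne 1$ in the statement are precisely what is needed to invoke both Theorem \ref{thm2.3} (for $\tilde\lambda$) and Theorem \ref{thm2.5} (for $\lambda$ with forcing $v$) simultaneously.
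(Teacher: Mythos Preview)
Your proposal is correct and follows essentially the same approach as the paper: factor the Hill operator as a composition of two first-order operators and apply Theorems~\ref{thm2.3} and~\ref{thm2.5} in succession. The only cosmetic difference is that the paper absorbs the shift into the auxiliary function, setting $\psi(t)=\Delta_h\xi(t-h)+\lambda(t-h)\xi(t-h)$ so that $\psi$ satisfies $|\Delta_h\psi(t)-\lambda(t)\psi(t)|\le\varepsilon$ with the \emph{unshifted} coefficient $\lambda$, whereas you leave the auxiliary function unshifted and instead carry the shift in the coefficient $\tilde\lambda$, then invoke the (correct) observation that $K_0(\tilde\lambda)=K_0(\lambda)$ by cyclic invariance of $e_\lambda(nh)$ and of the set $\{S_k\}$.
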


\begin{proof}
For arbitrary $\varepsilon>0$, suppose a function $\xi:\T\rightarrow\R$ satisfies 
$$ \left|\Delta^2_h \xi(t) + \left[\Delta_h \lambda(t) - \lambda(t)\lambda(t+h)\right]\xi(t)\right|\le\varepsilon $$ 
for all $t\in\T$. Define the function $\psi(t):=\Delta_h\xi(t-h)+\lambda(t-h)\xi(t-h)$ for $t\in\T\backslash\{0\}$. Shifting and using the substitution, we have
\begin{align*}
 \Delta_h\psi(t+h)-\lambda(t+h)\psi(t+h) &= \Delta^2_h \xi(t) + \frac{1}{h}\left[\lambda(t+h)\xi(t+h)-\lambda(t)\xi(t)\right] \\
&  -\lambda(t+h)\Delta_h \xi(t)-\lambda(t+h)\lambda(t)\xi(t) \\
&= \Delta^2_h \xi(t) + \frac{1}{h}\left[\lambda(t+h)\xi(t)-\lambda(t)\xi(t)\right]-\lambda(t+h)\lambda(t)\xi(t) \\
&= \Delta^2_h \xi(t) + \left[\Delta_h \lambda(t)-\lambda(t)\lambda(t+h)\right]\xi(t)
\end{align*}
for all $t\in\T$. Shifting the variable back, it follows that $\left|\Delta_h\psi(t)-\lambda(t)\psi(t)\right|\le\varepsilon$ for all $t\in\T\backslash\{0\}$. 
Let $S_0$ and $S_k$ for $k\in\{1,2,\ldots,n-1\}$ be given by \eqref{sum0} and \eqref{sumk}, respectively. 
By Definition \ref{def2.1} and Theorem \ref{thm2.3}, \eqref{neq} has Hyers-Ulam stability with Hyers-Ulam stability constant $K_0(\lambda)$ as in \eqref{nKmax0}. Moreover, if 
$|e_\lambda(nh)|>1$, then $K_0(\lambda)$ is the minimum Hyers-Ulam stability constant for \eqref{neq}. In particular, there exists a solution $x$ of \eqref{neq} such that 
$\left|\psi(t)-x(t)\right|\le K_0(\lambda)\varepsilon$ on $\T\backslash\{0\}$. Using the definition of $\psi$, we have
\[ \left|\Delta_h\xi(t-h)+\lambda(t-h)\xi(t-h)-x(t)\right|=\left|\psi(t)-x(t)\right|\le K_0(\lambda)\varepsilon, \quad t\in\T\backslash\{0\}, \]
so that
\begin{equation*}\label{xi-firstorder}
 \left|\Delta_h\xi(t)+\lambda(t)\xi(t)-x(t+h)\right|\le K_0(\lambda)\varepsilon=\varepsilon', \quad t\in\T. 
\end{equation*}
By Theorem \ref{thm2.5} with $f(t)=x(t+h)$  for all $t\in\T$, there exists a solution $\eta:\T\rightarrow\R$ of the difference equation 
\begin{equation}\label{non-homofirst}
 \Delta_h\eta(t)+\lambda(t)\eta(t)=x(t+h), \quad t\in\T, 
\end{equation}
such that by Definition \ref{def2.4},
\[ \left|\xi(t)-\eta(t)\right|\le K_0(-\lambda)\varepsilon' = K_0(-\lambda)K_0(\lambda)\varepsilon \]
for all $t\in\T$.
Moreover, applying the $h$-difference operator and using the discrete product rule on \eqref{non-homofirst}, we have
\begin{eqnarray*}
 \Delta^2_h\eta(t)+[\Delta_h\lambda(t)-\lambda(t)\lambda(t+h)]\eta(t) &=& \Delta^2_h\eta(t)+\eta(t)\Delta_h\lambda(t)+\lambda(t+h)\Delta_h\eta(t) \\
 & & -\lambda(t+h)\Delta_h\eta(t)-\lambda(t)\lambda(t+h)\eta(t) \\
 &=& \Delta^2_h\eta(t)+\Delta_h\left(\lambda(t)\eta(t)\right) \\
 & & -\lambda(t+h)\left(\Delta_h\eta(t)+\lambda(t)\eta(t)\right) \\
 &=& \Delta_h x(t+h)-\lambda(t+h)x(t+h) \\
 &=& 0
\end{eqnarray*}
for $t\in\T$, as $x=x(t)$ is a solution of \eqref{neq} on $\T\backslash\{0\}$. Therefore, $\eta$ is also a solution of \eqref{hilleq} on $\T$, satisfying
\[ \left|\xi(t)-\eta(t)\right|\le K_0(-\lambda)K_0(\lambda)\varepsilon, \qquad t\in\T. \]
As a consequence, \eqref{hilleq} has Hyers-Ulam stability, with Hyers-Ulam stability constant $K=K_0(\lambda)K_0(-\lambda)$, using \eqref{nKmax0} and \eqref{nKmax0-neg}, respectively.
\end{proof}

Next, we consider the non-homogeneous discrete Hill-type $h$-difference equation
\begin{equation}\label{non-homohilleq}
 \Delta^2_h y(t) + \left[\Delta_h \lambda(t) - \lambda(t)\lambda(t+h)\right]y(t) = f(t),
\end{equation}
where $\lambda$, $f:\T\rightarrow\R$ and $\lambda(t+\omega)=\lambda(t)$ for some $\omega\in\T$.

\begin{definition}[Hyers-Ulam Stability]
Equation \eqref{non-homohilleq} has Hyers-Ulam stability if and only if there exists a constant $L>0$ with the following property: For arbitrary $\varepsilon>0$, if a function $\xi:\T\rightarrow\R$ satisfies $\left|\Delta^2_h \xi(t) + \left[\Delta_h \lambda(t) - \lambda(t)\lambda(t+h)\right]\xi(t)-f(t)\right|\le\varepsilon$ for all $t\in\T$, then there exists a solution $y:\T\rightarrow\R$ of \eqref{non-homohilleq} such that $|\xi(t)-y(t)|\le L\varepsilon$ for all $t\in\T$. 
Such a constant $L$ is called a Hyers-Ulam stability constant for \eqref{non-homohilleq} on $\T$.
\end{definition}

% Theorem 3.4 %

\begin{theorem}\label{thm3.4}
Assume the coefficient function $\lambda$ satisfies \eqref{n-cycle} for $\lambda_0,\lambda_1,\ldots,\lambda_{n-1}\in\R\backslash\{\pm\frac{1}{h}\}$, with 
$0 < |e_{\pm \lambda}(nh)| \ne 1$, where the discrete exponential function is given in \eqref{epdef}. Let $S_0=S_0(\pm\lambda)$ and $S_k=S_k(\pm\lambda)$ for $k\in\{1,2,\ldots,n-1\}$ be given by \eqref{sum0} and \eqref{sumk}, respectively. Additionally, let $K_0(\pm\lambda)$ be as in \eqref{nKmax0} and \eqref{nKmax0-neg}, respectively. Then, \eqref{non-homohilleq} has Hyers-Ulam stability on $\T$, with Hyers-Ulam stability constant $L=K_0(\lambda)K_0(-\lambda)$.
\end{theorem}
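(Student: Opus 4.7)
The plan is to reduce Theorem \ref{thm3.4} to the homogeneous main result (Theorem \ref{thm3.2}) by the same subtract-off-a-particular-solution argument that the authors use in Theorem \ref{thm2.5} to pass from the homogeneous first-order equation \eqref{neq} to its non-homogeneous counterpart \eqref{neq-nonhomo}. In outline: take any particular solution $u$ of \eqref{non-homohilleq}, show that $\xi-u$ is an $\varepsilon$-approximate solution of the associated homogeneous Hill-type equation \eqref{hilleq}, invoke Theorem \ref{thm3.2} to produce an exact solution $v$ of \eqref{hilleq} close to $\xi-u$, and finally verify that $u+v$ is a solution of \eqref{non-homohilleq} close to $\xi$.

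First, note that for any prescribed initial values $u(0),u(h)\in\R$, solving \eqref{non-homohilleq} for $y(t+2h)$ gives an explicit two-step recurrence on $\T=\{0,h,2h,\ldots\}$, so a particular solution $u:\T\rightarrow\R$ always exists. Given $\varepsilon>0$ and $\xi$ satisfying
\[ \left|\Delta^2_h\xi(t)+\left[\Delta_h\lambda(t)-\lambda(t)\lambda(t+h)\right]\xi(t)-f(t)\right|\le\varepsilon, \qquad t\in\T, \]
I would set $\phi:=\xi-u$ and subtract the exact identity $\Delta^2_h u(t)+\left[\Delta_h\lambda(t)-\lambda(t)\lambda(t+h)\right]u(t)=f(t)$ from this inequality. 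The forcing terms cancel by linearity of $\Delta^2_h$, so
\[ \left|\Delta^2_h\phi(t)+\left[\Delta_h\lambda(t)-\lambda(t)\lambda(t+h)\right]\phi(t)\right|\le\varepsilon, \qquad t\in\T, \]
i.e. $\phi$ is an $\varepsilon$-approximate solution of the homogeneous equation \eqref{hilleq}. Under the stated hypotheses on $\lambda$, Theorem \ref{thm3.2} then yields a solution $v:\T\rightarrow\R$ of \eqref{hilleq} with $|\phi(t)-v(t)|\le K_0(\lambda)K_0(-\lambda)\varepsilon$ for all $t\in\T$. Setting $y:=u+v$, linearity gives
\[ \Delta^2_h y(t)+\left[\Delta_h\lambda(t)-\lambda(t)\lambda(t+h)\right]y(t)=f(t)+0=f(t), \]
so $y$ solves \eqref{non-homohilleq}, while $|\xi(t)-y(t)|=|\phi(t)-v(t)|\le K_0(\lambda)K_0(-\lambda)\varepsilon$. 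This delivers Hyers--Ulam stability with $L=K_0(\lambda)K_0(-\lambda)$, as desired.

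I do not expect any genuine obstacle here, because all the substantive analysis is already packaged in Theorem \ref{thm3.2}; what remains is a purely algebraic reduction of the inhomogeneous problem to the homogeneous one, exactly parallel to the proof of Theorem \ref{thm2.5}. The only point that warrants a moment of care is the unremarked existence of the particular solution $u$, which is immediate from the explicit two-step recurrence structure of \eqref{non-homohilleq} on the uniform grid $\T$.
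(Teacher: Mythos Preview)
Your proposal is correct and follows essentially the same approach as the paper: subtract a particular solution $u$ of \eqref{non-homohilleq}, apply Theorem~\ref{thm3.2} to the resulting $\varepsilon$-approximate solution of \eqref{hilleq}, and add back $u$ to the exact homogeneous solution produced. The paper's proof is identical in structure (using the notation $\eta$, $\psi=\eta-u$, $y=u+v$) and, like you, relies on nothing beyond linearity and Theorem~\ref{thm3.2}; it simply takes the existence of $u$ for granted rather than remarking on the two-step recurrence.
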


\begin{proof}
Let $\varepsilon>0$ be a fixed arbitrary constant, and let $u$ be a solution of \eqref{non-homohilleq} on $\T$. Suppose that $\eta:\T\rightarrow\R$ satisfies
\[ \left|\Delta^2_h \eta(t) + \left[\Delta_h \lambda(t) - \lambda(t)\lambda(t+h)\right]\eta(t)-f(t)\right|\le\varepsilon \]
for all $t\in\T$. Then we have
\begin{eqnarray*}
 \varepsilon &\ge& \left|\Delta^2_h \eta(t) + \left[\Delta_h \lambda(t) - \lambda(t)\lambda(t+h)\right]\eta(t)-f(t)\right| \\
 &=& \left|\Delta^2_h \eta(t) + \left[\Delta_h \lambda(t) - \lambda(t)\lambda(t+h)\right]\eta(t)-\left(\Delta^2_h u(t) + \left[\Delta_h \lambda(t) - \lambda(t)\lambda(t+h)\right]u(t) \right)\right| \\
 &=& \left|\Delta^2_h (\eta-u)(t) + \left[\Delta_h \lambda(t) - \lambda(t)\lambda(t+h)\right](\eta-u)(t)\right|.
\end{eqnarray*}
As in Definition \ref{def3.1}, we have that the function $\psi=(\eta-u)$ satisfies
\[ \left|\Delta^2_h \psi(t) + \left[\Delta_h \lambda(t) - \lambda(t)\lambda(t+h)\right]\psi(t)\right|\le\varepsilon. \]
By Theorem \ref{thm3.2}, there exists a solution $v:\T\rightarrow\R$ of \eqref{hilleq} such that
\[ \left|\psi(t)-v(t)\right| = \left|(\eta-u)(t)-v(t)\right|\le K_0(\lambda)K_0(-\lambda)\varepsilon. \]
Define $y(t)=(u+v)(t)$. Then, we have
\[ \Delta^2_h y(t) + \left[\Delta_h \lambda(t) - \lambda(t)\lambda(t+h)\right]y(t) = \Delta^2_h (u+v)(t) + \left[\Delta_h \lambda(t) - \lambda(t)\lambda(t+h)\right](u+v)(t) = f(t). \]
That is, $y$ is a solution of \eqref{non-homohilleq} such that $|\eta(t)-y(t)|\le K_0(\lambda)K_0(-\lambda)\varepsilon$ for all $t\in\T$. Thus, \eqref{non-homohilleq} has Hyers-Ulam stability with Hyers-Ulam stability constant $L=K_0(\lambda)K_0(-\lambda)$.
\end{proof}

% Lemma 3.3 %

%\begin{lemma}
%If the coefficient function $\lambda$ satisfies \eqref{n-cycle} for $\lambda_0,\lambda_1,\ldots,\lambda_{n-1}\in\R\backslash\{\pm\frac{1}{h}\}$, with 
%$|e_{-\lambda}(nh)| = 1$, where the discrete exponential function is given in \eqref{epdef}, then \eqref{hilleq} is not Ulam stable on $\T$.
%\end{lemma}

%\begin{proof}
%Assume $\lambda$ satisfies \eqref{n-cycle} for $\lambda_0,\lambda_1,\ldots,\lambda_{n-1}\in\R\backslash\{\pm\frac{1}{h}\}$. Let the discrete exponential function be %as in \eqref{epdef}, and suppose $|e_{-\lambda}(nh)| = 1$. Then, 
%$$ \ell:=\max_{t\in\T}|e_{-\lambda}(t+h)| $$
%exists in $\R$ and is finite. Given arbitrary $\varepsilon>0$, define $\xi:\T\rightarrow\R$ via
%\[ \xi(t) = \varepsilon\ell t e_{-\lambda}(t). \]
%Taking $h$-step differences and using the discrete product rule, we have
%\begin{eqnarray*}
% \Delta_h\xi(t)   &=& \varepsilon\ell t (-\lambda(t))e_{-\lambda}(t) + \varepsilon\ell e_{-\lambda}(t+h) \\
% \Delta^2_h\xi(t) &=& \varepsilon\ell t [(-\lambda(t+h))(-\lambda(t))e_{-\lambda}(t)+\Delta_h\lambda(t)e_{-\lambda}(t)] + \varepsilon\ell e_{-\lambda}(t+h) \\
%\end{eqnarray*}
%\end{proof}

%%%%%%%%%%%%%
% Section 4 %
%%%%%%%%%%%%%

\section{Examples}

In this section, we present some examples that apply our results in the previous sections.

% Example 3.3 %

\begin{example}
Fix step size $h>0$. For $A>0$ but $A\not\in\left\{\frac{1}{h}, \frac{\sqrt{2}}{h}\right\}$, set $\lambda(t)=\frac{2A}{\sqrt{3}}\sin\left(\frac{2\pi t}{3h}\right)$. Then, $\lambda$ is a 3-cycle on $\T$, with $\lambda_0=0$, $\lambda_1=A$, and $\lambda_2=-A$. Moreover, $\Delta_h \lambda(t)-\lambda(t)\lambda(t+h)$ is the 3-cycle on $\T$ given by $\left\{\frac{A}{h}, \frac{A(Ah-2)}{h}, \frac{A}{h} \right\}$.
Using \eqref{sum0} and \eqref{sumk}, respectively, we have
\begin{center}
\begin{tabular}{lclllcl}
 $S_0(\lambda)$ & = & $1+\frac{1}{|1+Ah|}+\frac{1}{|1+Ah| |1-Ah|}$, & & $S_0(-\lambda)$ &=& $1+\frac{1}{|1-Ah|}+\frac{1}{|1-Ah| |1+Ah|}$, \\
 $S_1(\lambda)$ & = & $\frac{1}{|1+Ah|} + \frac{2}{|1+Ah| |1-Ah|}$, & & $S_1(-\lambda)$ &=& $\frac{1}{|1-Ah|} + \frac{2}{|1-Ah| |1+Ah|}$, \\
 $S_2(\lambda)$ & = & $\frac{2}{|1-Ah|} + \frac{1}{|1+Ah| |1-Ah|}$, & & $S_2(-\lambda)$ &=& $\frac{2}{|1+Ah|} + \frac{1}{|1-Ah| |1+Ah|}$. 	  
\end{tabular}
\end{center}
By \eqref{epdef} with $t=3h$, we have
\begin{eqnarray*}
 e_{\pm\lambda}(3h) &=& 1 - h^2 A^2, \qquad A\ne\frac{1}{h}.
\end{eqnarray*}
If $|e_\lambda(3h)|=1$, then \eqref{neq} is not Hyers-Ulam stable, and if $|e_{-\lambda}(3h)|=1$, then \eqref{neq-nonhomo} is not Hyers-Ulam stable, by \cite[Remark 4.1]{aor}. 
If $A\ne \frac{\sqrt{2}}{h}$, then $|e_{\pm\lambda}(3h)|\ne 1$. Following calculations on a computer algebra system, we have
\begin{align*} 
\max\left\{S_0(\lambda), S_1(\lambda), S_2(\lambda)\right\} 
 &=\begin{cases} 
	S_2(\lambda) &\text{if}\quad A\in\left(0,\;\frac{1}{h}\right) \cup \left(\frac{1}{h},\;\frac{\sqrt{2}}{h}\right) \cup \left(\frac{\sqrt{2}}{h},\; \frac{1+\sqrt{17}}{2h}\right) \\
	S_0(\lambda) &\text{if}\quad A\in\left[\frac{1+\sqrt{17}}{2h},\; \infty\right),
	\end{cases} \\
\max\left\{S_0(-\lambda), S_1(-\lambda), S_2(-\lambda)\right\} 
 &=\begin{cases} 
	S_1(-\lambda) &\text{if}\quad A\in\left(0,\;\frac{1}{h}\right) \cup \left(\frac{1}{h},\;\frac{\sqrt{2}}{h}\right) \\
	S_0(-\lambda) &\text{if}\quad A\in\left[\frac{\sqrt{2}}{h},\; \infty\right).
	\end{cases} 
\end{align*}
Employing Theorem \ref{thm3.2} for step size $h>0$ but $h\not\in\left\{\frac{1}{A},\; \frac{\sqrt{2}}{A}\right\}$, equation \eqref{hilleq} has Hyers-Ulam stability on $\T$, with Hyers-Ulam stability constant $K=K_0(\lambda)K_0(-\lambda)$, where from \eqref{nKmax0} and \eqref{nKmax0-neg}
\begin{eqnarray*}
   K_0(\lambda):=\frac{h|e_{\lambda}(3h)|}{\left|1-|e_{\lambda}(3h)|\right|}\max\left\{S_0(\lambda), S_1(\lambda), S_2(\lambda)\right\}, \\
   K_0(-\lambda):=\frac{h|e_{-\lambda}(3h)|}{\left|1-|e_{-\lambda}(3h)|\right|}\max\left\{S_0(-\lambda), S_1(-\lambda), S_2(-\lambda)\right\}.
\end{eqnarray*}
To be precise, for any step size $h>0$ and any $A>0$ but $A\not\in\left\{\frac{1}{h},\; \frac{\sqrt{2}}{h}\right\}$, equation \eqref{hilleq} has Hyers-Ulam stability on 
$\T$, with Hyers-Ulam stability constant $K=K_0(\lambda)K_0(-\lambda)$ given by
\[ K= \frac{h^2\left|1-h^2A^2\right|^2}{\left|1-|1-h^2A^2|\right|^2}
\begin{cases}
S_2(\lambda)S_1(-\lambda) &\text{if}\quad A\in\left(0,\;\frac{1}{h}\right) \cup \left(\frac{1}{h},\;\frac{\sqrt{2}}{h}\right) \\
S_2(\lambda)S_0(-\lambda) &\text{if}\quad A\in\left(\frac{\sqrt{2}}{h},\; \frac{1+\sqrt{17}}{2h}\right) \\
S_0(\lambda)S_0(-\lambda) &\text{if}\quad A\in\left[\frac{\sqrt{2}}{h},\; \infty\right). 
\end{cases} \]
\end{example}

% Example 3.4: Two-Cycle Case %

\begin{example}\label{Two-Cycle}
Fix step size $h>0$. For $A,B>0$ with $A\ne B$ and $A,B\not\in\left\{\frac{1}{h}\right\}$, set 
$$ \lambda(t) = \begin{cases} A &\text{if}\quad \frac{t}{h}\equiv 0 \mod 2, \\ B &\text{if}\quad \frac{t}{h}\equiv 1 \mod 2.
\end{cases} $$ 
Then, $\lambda$ is a 2-cycle on $\T$, with $\lambda_0=A$ and $\lambda_1=B$. Moreover, $\Delta_h \lambda(t)-\lambda(t)\lambda(t+h)$ is the 2-cycle on $\T$ given by $\left\{\frac{B-A(1+Bh)}{h}, \frac{A-B(1+Ah)}{h} \right\}$.
Using \eqref{sum0} and \eqref{sumk}, respectively, we have
\begin{center}
\begin{tabular}{lclllcl}
 $S_0(\lambda)$ & = & $\frac{1}{|1+Ah|}+\frac{1}{|1+Ah| |1+Bh|}$, & & $S_0(-\lambda)$ &=& $\frac{1}{|1-Ah|}+\frac{1}{|1-Ah| |1-Bh|}$, \\
 $S_1(\lambda)$ & = & $\frac{1}{|1+Bh|}+\frac{1}{|1+Ah| |1+Bh|}$, & & $S_1(-\lambda)$ &=& $\frac{1}{|1-Bh|}+\frac{1}{|1-Ah| |1-Bh|}$.
\end{tabular}
\end{center}
By \eqref{epdef} with $t=2h$, we have
\begin{eqnarray*}
 e_{\pm\lambda}(2h) &=& (1\pm Ah)(1\pm Bh), \qquad A,B\ne\frac{1}{h}.
\end{eqnarray*}
If $|e_{\lambda}(2h)|=1$, then \eqref{neq} is not Hyers-Ulam stable, and if $|e_{-\lambda}(2h)|=1$, then \eqref{neq-nonhomo} is not Hyers-Ulam stable, by \cite[Remark 4.1]{aor}. 
Working through the cases, we see that if $A\ne \frac{B}{-1+Bh}$ $\left(h\ne\frac{A+B}{AB}\right)$ for $B>\frac{1}{h}$, and $A\ne \frac{2-Bh}{h(1-Bh)}$ for $B\in\left(0,\frac{1}{h}\right)\cup\left(\frac{2}{h},\infty\right)$, then $|e_{\pm\lambda}(2h)|\ne 1$. 
Calculations using a computer algebra system show that
\begin{align*} 
\max\left\{S_0(\lambda), S_1(\lambda)\right\} 
 &=\begin{cases} 
	S_0(\lambda) &\text{if}\quad 0<A\le B \\
	S_1(\lambda) &\text{if}\quad A>B,
	\end{cases} \\
\max\left\{S_0(-\lambda), S_1(-\lambda)\right\} 
 &=\begin{cases}
	S_0(-\lambda) &\text{if}\quad 
	\begin{cases} 
	  0<A<B &\text{and}\quad h\in\left(\frac{2}{A+B},\frac{1}{A}\right)\cup\left(\frac{1}{A},\infty\right),\; \text{or} \\ A>B>0 &\text{and}\quad h\in\left(0,\frac{1}{A}\right)   
		       \cup\left(\frac{1}{A},\frac{2}{A+B}\right)
	\end{cases} \\
	S_1(-\lambda) &\text{if}\quad 
	\begin{cases} 
	  0<A<B &\text{and}\quad h\in\left(0,\frac{1}{B}\right)\cup\left(\frac{1}{B},\frac{2}{A+B}\right),\; \text{or} \\ A>B>0 &\text{and}\quad 
		h\in\left(\frac{2}{A+B},\frac{1}{B}\right)\cup\left(\frac{1}{B},\infty\right).
	\end{cases} \\\end{cases} 
\end{align*}
Employing Theorem \ref{thm3.2} for step size $h>0$ but $h\not\in\left\{\frac{1}{A},\; \frac{1}{B}\right\}$, equation \eqref{hilleq} has Hyers-Ulam stability on $\T$, with Hyers-Ulam stability constant $K=K_0(\lambda)K_0(-\lambda)$, where from \eqref{nKmax0} and \eqref{nKmax0-neg}
$$ K_0(\lambda):=\frac{h|e_{\lambda}(2h)|}{\left|1-|e_{\lambda}(2h)|\right|}\max\left\{S_0(\lambda), S_1(\lambda)\right\}, \quad K_0(-\lambda):=\frac{h|e_{-\lambda}(2h)|}{\left|1-|e_{-\lambda}(2h)|\right|}\max\left\{S_0(-\lambda), S_1(-\lambda)\right\}. $$
To be precise, for any step size $h>0$ and any $A>0$ but $A,B\not\in\left\{\frac{1}{h}\right\}$, equation \eqref{hilleq} has Hyers-Ulam stability on 
$\T$, with Hyers-Ulam stability constant $K=K_0(\lambda)K_0(-\lambda)$ given by
\begin{align*}
& K = \frac{h^2 \left|1-A^2h^2\right| \left|1-B^2h^2\right|}{\left|1-|1+Ah||1+Bh|\right| \left|1-|1-Ah||1-Bh|\right|} \\
&\cdot\begin{cases}
S_0(\lambda)S_0(-\lambda) &\text{if}\quad 0<A<B \;\text{and}\; h\in\left(\frac{2}{A+B},\frac{1}{A}\right)\cup\left(\frac{1}{A},\frac{A+B}{AB}\right)\cup\left(\frac{A+B}{AB},\infty\right) \\
S_0(\lambda)S_1(-\lambda) &\text{if}\quad 0<A<B \;\text{and}\; h\in\left(0,\frac{1}{B}\right)\cup\left(\frac{1}{B},\frac{2}{A+B}\right) \\
S_1(\lambda)S_0(-\lambda) &\text{if}\quad A>B>0 \;\text{and}\; h\in\left(0,\frac{1}{A}\right)\cup\left(\frac{1}{A},\frac{2}{A+B}\right) \\
S_1(\lambda)S_1(-\lambda) &\text{if}\quad A>B>0 \;\text{and}\; h\in\left(\frac{2}{A+B},\frac{1}{B}\right)\cup\left(\frac{1}{B},\frac{A+B}{AB}\right)\cup\left(\frac{A+B}{AB},\infty\right).
\end{cases} 
\end{align*}
This completes the example.
\end{example}

%%%%%%%%%%%%%
% Section 5 %
%%%%%%%%%%%%%

\section{Application to third-order equations}

In this section, we establish the Hyers-Ulam stability for certain third-order linear $h$-difference equations by applying the main theorem. Now, we consider the third-order $h$-difference equation
\begin{equation}\label{third}
 \Delta^3_h y(t) + p(t)\Delta^2_h y(t) + q(t)\Delta_h y(t) + r(t)y(t) = 0
\end{equation}
with
\begin{equation}\label{pqr}
 \begin{split}
  p(t) &= \lambda(t+2h), \\
  q(t) &= 2\Delta_h \lambda(t+h) +\Delta_h \lambda(t+2h) - \lambda(t+2h)\lambda(t+3h), \\
  r(t) &= \Delta^2_h \lambda(t) + \lambda(t)\Delta_h \lambda(t+2h) - \lambda(t)\lambda(t+2h)\lambda(t+3h),
 \end{split}
\end{equation}
where $\lambda:\T\rightarrow\R$ and $\lambda(t+\omega)=\lambda(t)$ for some $\omega\in\T$.
\begin{definition}[Hyers-Ulam Stability]
Equation \eqref{third} has Hyers-Ulam stability if and only if there exists a constant $K>0$ with the following property: For arbitrary $\varepsilon>0$, if a function $\xi:\T\rightarrow\R$ satisfies $\left|\Delta^3_h \xi(t) + p(t)\Delta^2_h \xi(t) + q(t)\Delta_h \xi(t) + r(t)\xi(t)\right|\le\varepsilon$ for all $t\in\T$, then there exists a solution $y:\T\rightarrow\R$ of \eqref{third} such that $|\xi(t)-y(t)|\le K\varepsilon$ for all $t\in\T$. 
Such a constant $K$ is called a Hyers-Ulam stability constant for \eqref{third} on $\T$.
\end{definition}

% Theorem 5.2 %

\begin{theorem}\label{thm5.2}
Assume the coefficient function $\lambda$ satisfies \eqref{n-cycle} for $\lambda_0,\lambda_1,\ldots,\lambda_{n-1}\in\R\backslash\{\pm\frac{1}{h}\}$, with 
$0 < |e_{\pm \lambda}(nh)| \ne 1$, where the discrete exponential function is given in \eqref{epdef}. Let $S_0=S_0(\pm\lambda)$ and $S_k=S_k(\pm\lambda)$ for $k\in\{1,2,\ldots,n-1\}$ be given by \eqref{sum0} and \eqref{sumk}, respectively. Additionally, let $K_0(\pm\lambda)$ be as in \eqref{nKmax0} and \eqref{nKmax0-neg}, respectively. Then, \eqref{third} with \eqref{pqr} has Hyers-Ulam stability on $\T$, with Hyers-Ulam stability constant $K=K_0(\lambda)(K_0(-\lambda))^2$.
\end{theorem}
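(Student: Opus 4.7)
The plan is to factor the third-order operator in \eqref{third} as the composition of the Hill-type operator from Section 3 (with coefficient $\lambda$ shifted by $2h$) and the first-order operator $\Delta_h + \lambda(\cdot)$. Setting $\eta(t) := \Delta_h y(t) + \lambda(t)y(t)$ and expanding $\Delta^2_h \eta$ via two applications of the discrete product rule $\Delta_h(fg)(t) = \Delta_h f(t)\cdot g(t) + f(t+h)\Delta_h g(t)$, I will verify the identity
\begin{equation*}
\Delta_h^2 \eta(t) + \bigl[\Delta_h\lambda(t+2h) - \lambda(t+2h)\lambda(t+3h)\bigr]\eta(t) = \Delta^3_h y(t) + p(t)\Delta^2_h y(t) + q(t)\Delta_h y(t) + r(t)y(t),
\end{equation*}
with $p, q, r$ exactly as in \eqref{pqr}. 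The $2h$ shift inside the Hill coefficient is harmless: since $\lambda$ is $n$-cyclic, so is $\tilde\lambda(t) := \lambda(t+2h)$, and both $|e_{\pm\tilde\lambda}(nh)|$ and $\max_k S_k(\pm\tilde\lambda)$ are invariant under the cyclic relabeling $\tilde\lambda_k = \lambda_{(k+2)\bmod n}$, which forces $K_0(\pm\tilde\lambda) = K_0(\pm\lambda)$.

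With this factorization in hand, the stability proof will unfold as a two-stage cascade. Fix $\varepsilon > 0$ and suppose $|\Delta^3_h\xi(t) + p(t)\Delta^2_h\xi(t) + q(t)\Delta_h\xi(t) + r(t)\xi(t)| \le \varepsilon$ on $\T$. I will set $\eta(t) := \Delta_h\xi(t) + \lambda(t)\xi(t)$; by the factorization identity (applied with $\xi$ in place of $y$), $\eta$ then satisfies the perturbed Hill-type inequality for the coefficient $\tilde\lambda$. Applying Theorem \ref{thm3.2} with $\lambda$ replaced by $\tilde\lambda$ yields a solution $z:\T\to\R$ of the shifted Hill-type equation with $|\eta(t) - z(t)| \le K_0(\tilde\lambda)K_0(-\tilde\lambda)\varepsilon = K_0(\lambda)K_0(-\lambda)\varepsilon =: \varepsilon'$. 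Rewriting this as $|\Delta_h\xi(t) + \lambda(t)\xi(t) - z(t)| \le \varepsilon'$ and invoking Theorem \ref{thm2.5} with forcing term $f(t) := z(t)$ will produce a solution $w:\T\to\R$ of $\Delta_h w(t) + \lambda(t)w(t) = z(t)$ satisfying $|\xi(t) - w(t)| \le K_0(-\lambda)\varepsilon' = K_0(\lambda)(K_0(-\lambda))^2\varepsilon$.

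To close the argument, it remains to verify that $w$ actually solves \eqref{third}. But by construction $\Delta_h w + \lambda w = z$, so the factorization identity applied to $w$ gives
\begin{equation*}
\Delta^3_h w(t) + p(t)\Delta^2_h w(t) + q(t)\Delta_h w(t) + r(t)w(t) = \Delta_h^2 z(t) + \bigl[\Delta_h\tilde\lambda(t) - \tilde\lambda(t)\tilde\lambda(t+h)\bigr] z(t) = 0,
\end{equation*}
since $z$ solves the shifted Hill-type equation in $\tilde\lambda$. Hence $w$ is a solution of \eqref{third} with $|\xi(t) - w(t)| \le K_0(\lambda)(K_0(-\lambda))^2\varepsilon$, establishing Hyers-Ulam stability of \eqref{third} with the claimed constant.

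The main obstacle is the factorization identity: its verification is algebraically routine but requires careful bookkeeping to expand $\Delta^2_h\eta$ into $\Delta^3_h y$, $\Delta^2_h y$, $\Delta_h y$, and $y$ contributions whose coefficients must match $1$, $p(t)$, $q(t)$, and $r(t)$ exactly. Once this algebraic step is complete, the remainder of the proof is a clean two-step cascade through Theorems \ref{thm3.2} and \ref{thm2.5}, with cyclic-shift invariance of the constants $K_0(\pm\cdot)$ bridging the shifted and unshifted coefficients.
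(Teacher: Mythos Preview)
Your proposal is correct and follows essentially the same strategy as the paper: factor the third-order operator as a Hill-type operator composed with $\Delta_h+\lambda(\cdot)$, then cascade Theorem~\ref{thm3.2} and Theorem~\ref{thm2.5}. The only cosmetic difference is where the shift by $2h$ is absorbed: the paper defines the auxiliary function with a backward shift, $\psi(t)=\Delta_h\xi(t-2h)+\lambda(t-2h)\xi(t-2h)$ on $\T\setminus\{0,h\}$, so that $\psi$ satisfies the Hill inequality with the \emph{unshifted} coefficient $\lambda$, whereas you keep $\eta(t)=\Delta_h\xi(t)+\lambda(t)\xi(t)$ on all of $\T$ and instead apply Theorem~\ref{thm3.2} with the \emph{shifted} coefficient $\tilde\lambda(t)=\lambda(t+2h)$, then invoke cyclic invariance of $K_0(\pm\cdot)$; the two routes are equivalent (your $z(t)$ is the paper's $x(t+2h)$), and your explicit cyclic-invariance remark in fact makes transparent a point the paper uses implicitly when applying Theorem~\ref{thm3.2} on the truncated domain.
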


\begin{proof}
Let $\varepsilon>0$ be a fixed arbitrary constant. Suppose that $\xi:\T\rightarrow\R$ satisfies
$$ \left|\Delta^3_h \xi(t) + p(t)\Delta^2_h \xi(t) + q(t)\Delta_h \xi(t) + r(t)\xi(t)\right|\le\varepsilon $$ 
for all $t\in\T$, where $p$, $q$ and $r$ are given in \eqref{pqr}. Let $\psi(t):=\Delta_h\xi(t-2h)+\lambda(t-2h)\xi(t-2h)$ for $t\in\T\backslash\{0,h\}$. Since
\[ \Delta_h\psi(t+2h) = \Delta^2_h \xi(t) + \lambda(t+h)\Delta_h\xi(t) + \left(\Delta_h\lambda(t)\right)\xi(t) \]
and
\[ \Delta^2_h\psi(t+2h) = \Delta^3_h \xi(t) + \lambda(t+2h)\Delta^2_h \xi(t) + 2\left(\Delta_h \lambda(t+h) \right)\Delta_h\xi(t) + \left(\Delta^2_h \lambda(t) \right) \xi(t) \]
hold, we see that
\begin{align*}
 \Delta^2_h &\psi(t+2h) + \left[\Delta_h \lambda(t+2h)-\lambda(t+2h)\lambda(t+3h)\right]\psi(t+2h)\\
  &= \Delta^3_h \xi(t) + \lambda(t+2h)\Delta^2_h \xi(t) + 2\left(\Delta_h \lambda(t+h) \right)\Delta_h\xi(t) + \left(\Delta^2_h \lambda(t) \right) \xi(t) \\
  &\hspace{5mm} + \left[\Delta_h \lambda(t+2h)-\lambda(t+2h)\lambda(t+3h)\right]\left[\Delta_h\xi(t)+\lambda(t)\xi(t)\right] \\
  &= \Delta^3_h \xi(t) + p(t)\Delta^2_h \xi(t) + q(t)\Delta_h \xi(t) + r(t)\xi(t)
\end{align*}
for all $t\in\T$. Shifting the variable back, it follows that
\[ \left|\Delta^2_h\psi(t)+[\Delta_h\lambda(t)-\lambda(t)\lambda(t+h)]\psi(t)\right|\le\varepsilon \]
for all $t\in\T\backslash\{0,h\}$. 
Let $S_0=S_0(\pm\lambda)$ and $S_k=S_k(\pm\lambda)$ for $k\in\{1,2,\ldots,n-1\}$ be given by \eqref{sum0} and \eqref{sumk}, respectively. Additionally, let $K_0(\pm\lambda)$ be as in \eqref{nKmax0} and \eqref{nKmax0-neg}, respectively. 
Using Theorem \ref{thm3.2}, equation \eqref{hilleq} has Hyers-Ulam stability with Hyers-Ulam stability constant $K_0(\lambda)K_0(-\lambda)$. That is, there is a solution $x$ of \eqref{hilleq} such that 
$\left|\psi(t)-x(t)\right|\le K_0(\lambda)K_0(-\lambda)\varepsilon$ on $\T\backslash\{0,h\}$. From the definition of $\psi$, we obtain the inequality
\[ \left|\Delta_h\xi(t-2h)+\lambda(t-2h)\xi(t-2h)-x(t)\right|=\left|\psi(t)-x(t)\right|\le K_0(\lambda)K_0(-\lambda)\varepsilon, \quad t\in\T\backslash\{0,h\}, \]
so that
\[ \left|\Delta_h\xi(t)+\lambda(t)\xi(t)-x(t+2h)\right|\le K_0(\lambda)K_0(-\lambda)\varepsilon=\tilde{\varepsilon}, \quad t\in\T. \]
Using Theorem \ref{thm2.5} with $f(t)=x(t+2h)$ for all $t\in\T$, there exists a solution $\eta:\T\rightarrow\R$ of the difference equation 
\begin{equation}\label{non-homofirst2}
 \Delta_h\eta(t)+\lambda(t)\eta(t)=x(t+2h), \quad t\in\T, 
\end{equation}
such that
\[ \left|\xi(t)-\eta(t)\right|\le K_0(-\lambda)\tilde{\varepsilon} = K_0(\lambda)(K_0(-\lambda))^2\varepsilon \]
for all $t\in\T$.

Next, we will prove that $\eta$ is also a solution of \eqref{third} with \eqref{pqr} on $\T$. 
From \eqref{non-homofirst2} and the discrete product rule, we have
\[ \Delta^2_hx(t+2h) = \Delta^3_h \eta(t) + \lambda(t+2h)\Delta^2_h \eta(t) + 2\left(\Delta_h \lambda(t+h) \right)\Delta_h\eta(t) + \left(\Delta^2_h \lambda(t) \right) \eta(t) \]
for all $t\in\T$. Hence we obtain
\begin{align*}
 \Delta^3_h \eta(t) &+ p(t)\Delta^2_h \eta(t) + q(t)\Delta_h \eta(t) + r(t)\eta(t)\\
  &= \Delta^3_h \eta(t) + \lambda(t+2h)\Delta^2_h \eta(t) + 2\left(\Delta_h \lambda(t+h) \right)\Delta_h\eta(t) + \left(\Delta^2_h \lambda(t) \right) \eta(t) \\
  &\hspace{5mm} + \left[\Delta_h \lambda(t+2h)-\lambda(t+2h)\lambda(t+3h)\right]\left[\Delta_h\eta(t)+\lambda(t)\eta(t)\right] \\
  &= \Delta^2_hx(t+2h)+\left[\Delta_h \lambda(t+2h)-\lambda(t+2h)\lambda(t+3h)\right]x(t+2h)\\
  &=0
\end{align*}
for $t\in\T$. Therefore, $\eta$ is also a solution of \eqref{third} with \eqref{pqr} on $\T$, and so that, \eqref{third} with \eqref{pqr} has Hyers-Ulam stability, with Hyers-Ulam stability constant $K=K_0(\lambda)(K_0(-\lambda))^2$, using \eqref{nKmax0} and \eqref{nKmax0-neg}, respectively.
\end{proof}

Next, we consider \eqref{third} with
\begin{equation}\label{pqr2}
 \begin{split}
  p(t) &= -\lambda(t+2h), \\
  q(t) &= -2\Delta_h \lambda(t+h) +\Delta_h \lambda(t+2h) - \lambda(t+2h)\lambda(t+3h), \\
  r(t) &= -\Delta^2_h \lambda(t) - \lambda(t)\Delta_h \lambda(t+2h) + \lambda(t)\lambda(t+2h)\lambda(t+3h),
 \end{split}
\end{equation}
where $\lambda:\T\rightarrow\R$ and $\lambda(t+\omega)=\lambda(t)$ for some $\omega\in\T$.

% Theorem 5.3 %

\begin{theorem}\label{thm5.3}
Assume the coefficient function $\lambda$ satisfies \eqref{n-cycle} for $\lambda_0,\lambda_1,\ldots,\lambda_{n-1}\in\R\backslash\{\pm\frac{1}{h}\}$, with 
$0 < |e_{\pm \lambda}(nh)| \ne 1$, where the discrete exponential function is given in \eqref{epdef}. Let $S_0=S_0(\pm\lambda)$ and $S_k=S_k(\pm\lambda)$ for $k\in\{1,2,\ldots,n-1\}$ be given by \eqref{sum0} and \eqref{sumk}, respectively. Additionally, let $K_0(\pm\lambda)$ be as in \eqref{nKmax0} and \eqref{nKmax0-neg}, respectively. Then, \eqref{third} with \eqref{pqr2} has Hyers-Ulam stability on $\T$, with Hyers-Ulam stability constant $K=(K_0(\lambda))^2K_0(-\lambda)$.
\end{theorem}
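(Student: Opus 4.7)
The plan is to mimic the proof of Theorem~\ref{thm5.2} almost verbatim, but with a single sign flip in the defining substitution that matches the sign flip appearing in \eqref{pqr2} relative to \eqref{pqr}. Specifically, I would set $\psi(t):=\Delta_h\xi(t-2h)-\lambda(t-2h)\xi(t-2h)$ for $t\in\T\setminus\{0,h\}$, i.e., replace the ``$+\lambda\xi$'' substitution of the Theorem~\ref{thm5.2} proof by ``$-\lambda\xi$.'' Applying the discrete product rule as in the earlier proof, this propagates through both $h$-differences so that
\[
\Delta^2_h\psi(t+2h)=\Delta^3_h\xi(t)-\lambda(t+2h)\Delta^2_h\xi(t)-2(\Delta_h\lambda(t+h))\Delta_h\xi(t)-(\Delta^2_h\lambda(t))\xi(t),
\]
which are exactly the three coefficients negated in \eqref{pqr2} relative to \eqref{pqr}.

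The first key algebraic identity I would verify is
\[
\Delta^2_h\psi(t+2h)+[\Delta_h\lambda(t+2h)-\lambda(t+2h)\lambda(t+3h)]\psi(t+2h)=\Delta^3_h\xi(t)+p(t)\Delta^2_h\xi(t)+q(t)\Delta_h\xi(t)+r(t)\xi(t),
\]
with $p,q,r$ from \eqref{pqr2}. This follows by expanding $\psi(t+2h)=\Delta_h\xi(t)-\lambda(t)\xi(t)$ on the left and matching the coefficients of $\Delta^2_h\xi(t)$, $\Delta_h\xi(t)$, and $\xi(t)$; the ``$-\lambda(t)$'' factor in $\psi$ is what produces the $-\lambda(t)\Delta_h\lambda(t+2h)+\lambda(t)\lambda(t+2h)\lambda(t+3h)$ part of $r(t)$, while the $\Delta_h\xi(t)$ coefficients recombine to give $q(t)$ in \eqref{pqr2}. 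Importantly, the Hill operator applied to $\psi$ still uses $\lambda$ (not $-\lambda$), so Theorem~\ref{thm3.2} produces a solution $x$ of \eqref{hilleq} with $|\psi(t)-x(t)|\le K_0(\lambda)K_0(-\lambda)\varepsilon$ on $\T\setminus\{0,h\}$.

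The second stage is where the constant changes shape relative to Theorem~\ref{thm5.2}. From the definition of $\psi$, shifting the variable yields
\[
|\Delta_h\xi(t)-\lambda(t)\xi(t)-x(t+2h)|\le K_0(\lambda)K_0(-\lambda)\varepsilon=:\tilde{\varepsilon},\quad t\in\T,
\]
which is the inhomogeneous first-order equation \eqref{neq-nonhomo} with $\lambda$ replaced by $-\lambda$ and forcing term $f(t)=x(t+2h)$. Applying Theorem~\ref{thm2.5} in this form (i.e., with $-\lambda$ in the role of $\lambda$) delivers an $\eta:\T\to\R$ satisfying $\Delta_h\eta(t)-\lambda(t)\eta(t)=x(t+2h)$ with stability constant $K_0(-(-\lambda))=K_0(\lambda)$; hence $|\xi(t)-\eta(t)|\le K_0(\lambda)\tilde{\varepsilon}=(K_0(\lambda))^2 K_0(-\lambda)\varepsilon$. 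A final computation, identical in spirit to the closing calculation of Theorem~\ref{thm5.2} but with the sign flip carried throughout the discrete product-rule manipulation of $\Delta_h\eta-\lambda\eta=x(t+2h)$, confirms that $\eta$ solves \eqref{third} with \eqref{pqr2}. I expect the only real obstacle to be the careful sign bookkeeping in the Hill-to-third-order identity; once verified, the two-stage stability chain immediately delivers the asserted constant $(K_0(\lambda))^2K_0(-\lambda)$.
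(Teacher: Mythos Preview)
Your proposal is correct and follows essentially the same approach as the paper's own proof: the same sign-flipped substitution $\psi(t)=\Delta_h\xi(t-2h)-\lambda(t-2h)\xi(t-2h)$, the same Hill-operator identity reducing to Theorem~\ref{thm3.2}, and the same second-stage application of Theorem~\ref{thm2.5} with $\lambda$ replaced by $-\lambda$ to produce the constant $(K_0(\lambda))^2K_0(-\lambda)$. The paper in fact abbreviates the final verification that $\eta$ solves \eqref{third} with \eqref{pqr2} by pointing back to the technique of Theorem~\ref{thm5.2}, exactly as you anticipate.
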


\begin{proof}
Let $\varepsilon>0$ be a fixed arbitrary constant. Suppose that $\xi:\T\rightarrow\R$ satisfies
$$ \left|\Delta^3_h \xi(t) + p(t)\Delta^2_h \xi(t) + q(t)\Delta_h \xi(t) + r(t)\xi(t)\right|\le\varepsilon $$ 
for all $t\in\T$, where $p$, $q$ and $r$ are given by \eqref{pqr2}. Define $\psi(t):=\Delta_h\xi(t-2h)-\lambda(t-2h)\xi(t-2h)$ for $t\in\T\backslash\{0,h\}$. Since
\[ \Delta_h\psi(t+2h) = \Delta^2_h \xi(t) - \lambda(t+h)\Delta_h\xi(t) - \left(\Delta_h\lambda(t)\right)\xi(t) \]
and
\[ \Delta^2_h\psi(t+2h) = \Delta^3_h \xi(t) - \lambda(t+2h)\Delta^2_h \xi(t) - 2\left(\Delta_h \lambda(t+h) \right)\Delta_h\xi(t) - \left(\Delta^2_h \lambda(t) \right) \xi(t) \]
hold, we see that
\begin{align*}
 \Delta^2_h &\psi(t+2h) + \left[\Delta_h \lambda(t+2h)-\lambda(t+2h)\lambda(t+3h)\right]\psi(t+2h)\\
  &= \Delta^3_h \xi(t) - \lambda(t+2h)\Delta^2_h \xi(t) - 2\left(\Delta_h \lambda(t+h) \right)\Delta_h\xi(t) - \left(\Delta^2_h \lambda(t) \right) \xi(t) \\
  &\hspace{5mm} + \left[\Delta_h \lambda(t+2h)-\lambda(t+2h)\lambda(t+3h)\right]\left[\Delta_h\xi(t)-\lambda(t)\xi(t)\right] \\
  &= \Delta^3_h \xi(t) + p(t)\Delta^2_h \xi(t) + q(t)\Delta_h \xi(t) + r(t)\xi(t)
\end{align*}
for all $t\in\T$. Hence we have
\[ \left|\Delta^2_h\psi(t)+[\Delta_h\lambda(t)-\lambda(t)\lambda(t+h)]\psi(t)\right|\le\varepsilon \]
for all $t\in\T\backslash\{0,h\}$. By Theorem \ref{thm3.2}, we conclude that there exists a solution $x$ of \eqref{hilleq} such that
$\left|\psi(t)-x(t)\right|\le K_0(\lambda)K_0(-\lambda)\varepsilon$ on $\T\backslash\{0,h\}$. This implies that
\[ \left|\Delta_h\xi(t)-\lambda(t)\xi(t)-x(t+2h)\right|\le K_0(\lambda)K_0(-\lambda)\varepsilon=\tilde{\varepsilon}, \quad t\in\T. \]
Using Theorem \ref{thm2.5} with $f(t)=x(t+2h)$ and $\lambda$ replaced by $-\lambda$ for all $t\in\T$, there exists a solution $\eta:\T\rightarrow\R$ of the difference equation 
\[ \Delta_h\eta(t)-\lambda(t)\eta(t)=x(t+2h), \quad t\in\T \]
such that
\[ \left|\xi(t)-\eta(t)\right|\le K_0(\lambda)\tilde{\varepsilon} = (K_0(\lambda))^2K_0(-\lambda)\varepsilon \]
for all $t\in\T$. We can show that $\eta$ is a solution of \eqref{third} with \eqref{pqr2} on $\T$ using the same technique of Theorem \ref{thm5.2}. Therefore, \eqref{third} with \eqref{pqr2} has Hyers-Ulam stability, with Hyers-Ulam stability constant $K=(K_0(\lambda))^2K_0(-\lambda)$.
\end{proof}

Next, we consider \eqref{third} with
\begin{equation}\label{pqr3}
 \begin{split}
  p(t) &= \lambda(t+2h), \\
  q(t) &= \Delta_h \lambda(t+h) - \lambda(t+h)\lambda(t+2h), \\
  r(t) &= \Delta^2_h \lambda(t) - \lambda(t)\Delta_h \lambda(t+h) - \lambda(t)\lambda(t+h)\lambda(t+2h),
 \end{split}
\end{equation}
where $\lambda:\T\rightarrow\R$ and $\lambda(t+\omega)=\lambda(t)$ for some $\omega\in\T$.

% Theorem 5.4 %

\begin{theorem}\label{thm5.4}
Assume the coefficient function $\lambda$ satisfies \eqref{n-cycle} for $\lambda_0,\lambda_1,\ldots,\lambda_{n-1}\in\R\backslash\{\pm\frac{1}{h}\}$, with 
$0 < |e_{\pm \lambda}(nh)| \ne 1$, where the discrete exponential function is given in \eqref{epdef}. Let $S_0=S_0(\pm\lambda)$ and $S_k=S_k(\pm\lambda)$ for $k\in\{1,2,\ldots,n-1\}$ be given by \eqref{sum0} and \eqref{sumk}, respectively. Additionally, let $K_0(\pm\lambda)$ be as in \eqref{nKmax0} and \eqref{nKmax0-neg}, respectively. Then, \eqref{third} with \eqref{pqr3} has Hyers-Ulam stability on $\T$, with Hyers-Ulam stability constant $K=K_0(\lambda)(K_0(-\lambda))^2$.
\end{theorem}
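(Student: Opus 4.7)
The plan mirrors the proofs of Theorems \ref{thm5.2} and \ref{thm5.3}: find a first-order substitution $\psi$ in terms of $\xi$ so that applying an appropriate Hill-type operator to $\psi$ reproduces the third-order operator with coefficients \eqref{pqr3}, then invoke Theorem \ref{thm3.2} to approximate $\psi$ by a Hill-equation solution, and finally invoke Theorem \ref{thm2.5} to lift back to a solution $\eta$ of the third-order equation close to $\xi$.

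The key and most delicate step is identifying the correct factorization. The coefficient $q(t) = \Delta_h\lambda(t+h) - \lambda(t+h)\lambda(t+2h)$ in \eqref{pqr3} is exactly the Hill coefficient $\Delta_h\lambda(\cdot) - \lambda(\cdot)\lambda(\cdot+h)$ evaluated at $t+h$, and the sign patterns in $r(t)$ and $p(t)$ suggest that the relevant Hill operator is the one with $\lambda$ replaced by $-\lambda$. Guided by this, I would set
\[ \psi(t) := \Delta_h\xi(t-h) + \lambda(t-h)\xi(t-h), \qquad t \in \T\setminus\{0\}, \]
so that $\psi(t+h) = \Delta_h\xi(t) + \lambda(t)\xi(t)$, and then verify the factorization identity
\[ \Delta^2_h\psi(t+h) + \bigl[-\Delta_h\lambda(t+h) - \lambda(t+h)\lambda(t+2h)\bigr]\psi(t+h) = \Delta^3_h\xi(t) + p(t)\Delta^2_h\xi(t) + q(t)\Delta_h\xi(t) + r(t)\xi(t) \]
for all $t \in \T$. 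This is a direct expansion via the discrete product rule, analogous to the one displayed in the proof of Theorem \ref{thm5.2}; it is the main place where careful bookkeeping of the $h$-shifts in $\lambda$ is required, since a slightly different choice would instead produce the coefficients \eqref{pqr} or \eqref{pqr2}.

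Once the identity is established, the hypothesis on $\xi$ gives $|\Delta^2_h\psi(s) + [-\Delta_h\lambda(s) - \lambda(s)\lambda(s+h)]\psi(s)| \le \varepsilon$ for all $s \in \T\setminus\{0\}$, which is exactly the Hyers--Ulam defect for the Hill equation \eqref{hilleq} with $\lambda$ replaced by $-\lambda$. The standing hypotheses on $\lambda_0,\ldots,\lambda_{n-1}$ and on $|e_{\pm\lambda}(nh)|$ are invariant under this change of sign, so Theorem \ref{thm3.2} applies and yields a solution $x:\T\to\R$ of $\Delta^2_hx + [-\Delta_h\lambda - \lambda(\cdot)\lambda(\cdot+h)]x = 0$ with $|\psi(t) - x(t)| \le K_0(-\lambda)K_0(\lambda)\varepsilon$ on $\T\setminus\{0\}$. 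Unwinding the definition of $\psi$ and shifting the variable back yields
\[ \bigl|\Delta_h\xi(t) + \lambda(t)\xi(t) - x(t+h)\bigr| \le K_0(\lambda)K_0(-\lambda)\varepsilon =: \tilde\varepsilon, \qquad t\in\T. \]

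Applying Theorem \ref{thm2.5} with $f(t) = x(t+h)$ then yields a solution $\eta:\T\to\R$ of $\Delta_h\eta + \lambda\eta = x(t+h)$ with $|\xi(t) - \eta(t)| \le K_0(-\lambda)\tilde\varepsilon = K_0(\lambda)(K_0(-\lambda))^2\varepsilon$ on $\T$. To confirm $\eta$ solves \eqref{third} with \eqref{pqr3}, I would reapply the factorization identity with $\eta$ in place of $\xi$: the left-hand side becomes $\Delta^2_hx(t+h) + [-\Delta_h\lambda(t+h) - \lambda(t+h)\lambda(t+2h)]x(t+h)$, i.e., the Hill operator with coefficient $-\lambda$ applied to $x$ at time $t+h$, which vanishes since $x$ solves that Hill equation throughout $\T$. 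This delivers the claimed Hyers--Ulam stability constant $K = K_0(\lambda)(K_0(-\lambda))^2$.
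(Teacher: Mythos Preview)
Your approach is correct but genuinely different from the paper's. Both arguments rest on the same underlying factorization of the third-order operator as $(\Delta_h+\lambda)\circ(\Delta_h-\lambda)\circ(\Delta_h+\lambda)$ (with the appropriate shifts), but you and the paper group the three factors differently. You take $\psi$ to be the \emph{first-order} expression $\Delta_h\xi+\lambda\xi$ and then apply the Hill operator with $-\lambda$ on the outside; this keeps you in the template of Theorems~\ref{thm5.2} and~\ref{thm5.3}, and requires only Theorem~\ref{thm3.2} (for the $-\lambda$ Hill equation) followed by Theorem~\ref{thm2.5}. The paper instead sets $\psi$ equal to the \emph{second-order} Hill expression $\Delta^2_h\xi+[\Delta_h\lambda-\lambda\lambda(\cdot+h)]\xi$ and applies the first-order operator $\Delta_h+\lambda$ on the outside; this route invokes Theorem~\ref{thm2.5} first and then the non-homogeneous Hill result Theorem~\ref{thm3.4}. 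Both groupings yield the same constant $K_0(\lambda)(K_0(-\lambda))^2$. Your version has the mild advantage that it never needs Theorem~\ref{thm3.4}, while the paper's version makes the parallel with Theorem~\ref{thm5.5} (where only the outer sign changes) more transparent. Your factorization identity does check out; the expansion gives exactly $p,q,r$ from \eqref{pqr3}.
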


\begin{proof}
Let $\varepsilon>0$ be a fixed arbitrary constant. Suppose that $\xi:\T\rightarrow\R$ satisfies
$$ \left|\Delta^3_h \xi(t) + p(t)\Delta^2_h \xi(t) + q(t)\Delta_h \xi(t) + r(t)\xi(t)\right|\le\varepsilon $$ 
for all $t\in\T$, where $p$, $q$ and $r$ are given by \eqref{pqr3}. Define
\[ \psi(t):=\Delta^2_h\xi(t-2h)+[\Delta_h\lambda(t-2h)-\lambda(t-2h)\lambda(t-h)]\xi(t-2h). \]
Since
\begin{align*}
 \Delta_h\psi(t+2h) &= \Delta^3_h \xi(t) + \left[\Delta_h \lambda(t+h)-\lambda(t+h)\lambda(t+2h)\right] \Delta_h \xi(t) \\
 &\hspace{5mm} + \left[\Delta^2_h\lambda(t) - \lambda(t+h)\left(\Delta_h\lambda(t+h)+\Delta_h\lambda(t)\right)\right]\xi(t)
\end{align*}
holds, we see that
\begin{align*}
 \Delta_h &\psi(t+2h) + \lambda(t+2h)\psi(t+2h)\\
  &= \Delta^3_h \xi(t) + \lambda(t+2h)\Delta^2_h \xi(t) + \left[\Delta_h \lambda(t+h)-\lambda(t+h)\lambda(t+2h)\right] \Delta_h \xi(t) \\
  &\hspace{5mm} + \left\{\Delta^2_h\lambda(t) - \lambda(t+h)\left[\Delta_h\lambda(t+h)+\Delta_h\lambda(t)\right] + \lambda(t+2h)\left[\Delta_h \lambda(t)-\lambda(t)\lambda(t+h)\right] \right\} \xi(t) \\
  &= \Delta^3_h \xi(t) + p(t)\Delta^2_h \xi(t) + q(t)\Delta_h \xi(t) + r(t)\xi(t)
\end{align*}
for all $t\in\T$. Hence we have
\[ \left|\Delta_h\psi(t)+\lambda(t)\psi(t)\right|\le\varepsilon \]
for all $t\in\T\backslash\{0,h\}$. By Theorem \ref{thm2.5} with $f(t)\equiv 0$, we conclude that there exists a solution $x$ of \eqref{neq-nonhomo} with $f(t)\equiv 0$ such that
$\left|\psi(t)-x(t)\right|\le K_0(-\lambda)\varepsilon$ on $\T\backslash\{0,h\}$. This implies that
\[ \left|\Delta^2_h\xi(t)+[\Delta_h\lambda(t)-\lambda(t)\lambda(t+h)]\xi(t)-x(t+2h)\right|\le K_0(-\lambda)\varepsilon=\tilde{\varepsilon}, \quad t\in\T. \]
Using Theorem \ref{thm3.4} with $f(t)=x(t+2h)$, there exists a solution $\eta:\T\rightarrow\R$ of the difference equation 
\begin{equation}\label{non-homo2nd}
 \Delta^2_h\eta(t)+[\Delta_h\lambda(t)-\lambda(t)\lambda(t+h)]\eta(t)=x(t+2h), \quad t\in\T
\end{equation}
such that
\[ \left|\xi(t)-\eta(t)\right|\le K_0(\lambda)K_0(-\lambda)\tilde{\varepsilon} = K_0(\lambda)(K_0(-\lambda))^2\varepsilon \]
for all $t\in\T$. 

Next, we will prove that $\eta$ is also a solution of \eqref{third} with \eqref{pqr3} on $\T$. 
From \eqref{non-homo2nd} and the discrete product rule, we have
\begin{align*}
 \Delta_hx(t+2h) &= \Delta^3_h \eta(t) + \left[\Delta_h \lambda(t+h)-\lambda(t+h)\lambda(t+2h)\right] \Delta_h \eta(t) \\
 &\hspace{5mm} + \left[\Delta^2_h\lambda(t) - \lambda(t+h)\left(\Delta_h\lambda(t+h)+\Delta_h\lambda(t)\right)\right]\eta(t)
\end{align*}
for all $t\in\T$. Hence we obtain
\begin{align*}
 \Delta^3_h \eta(t) &+ p(t)\Delta^2_h \eta(t) + q(t)\Delta_h \eta(t) + r(t)\eta(t)\\
  &= \Delta^3_h \eta(t) + \lambda(t+2h)\Delta^2_h \eta(t) + \left[\Delta_h \lambda(t+h)-\lambda(t+h)\lambda(t+2h)\right] \Delta_h \eta(t) \\
  &\hspace{5mm} + \left\{\Delta^2_h\lambda(t) - \lambda(t+h)\left[\Delta_h\lambda(t+h)+\Delta_h\lambda(t)\right] + \lambda(t+2h)\left[\Delta_h \lambda(t)-\lambda(t)\lambda(t+h)\right] \right\} \eta(t) \\
  &= \Delta_h x(t+2h) + \lambda(t+2h)x(t+2h)\\
  &= 0
\end{align*}
for $t\in\T$. Therefore, $\eta$ is also a solution of \eqref{third} with \eqref{pqr3} on $\T$, and so that, \eqref{third} with \eqref{pqr3} has Hyers-Ulam stability, with Hyers-Ulam stability constant $K=K_0(\lambda)(K_0(-\lambda))^2$.
\end{proof}

Finally, we consider \eqref{third} with coefficient functions
\begin{equation}\label{pqr4}
 \begin{split}
  p(t) &= -\lambda(t+2h), \\
  q(t) &= \Delta_h \lambda(t+h) - \lambda(t+h)\lambda(t+2h), \\
  r(t) &= \Delta^2_h \lambda(t) - \lambda(t+h)\Delta_h \lambda(t+h) - \left(\lambda(t+h)+\lambda(t+2h)\right)\Delta_h \lambda(t)\\
       &\hspace{5mm} + \lambda(t)\lambda(t+h)\lambda(t+2h),
 \end{split}
\end{equation}
where $\lambda:\T\rightarrow\R$ and $\lambda(t+\omega)=\lambda(t)$ for some $\omega\in\T$.

% Theorem 5.5 %

\begin{theorem}\label{thm5.5}
Assume the coefficient function $\lambda$ satisfies \eqref{n-cycle} for $\lambda_0,\lambda_1,\ldots,\lambda_{n-1}\in\R\backslash\{\pm\frac{1}{h}\}$, with 
$0 < |e_{\pm \lambda}(nh)| \ne 1$, where the discrete exponential function is given in \eqref{epdef}. Let $S_0=S_0(\pm\lambda)$ and $S_k=S_k(\pm\lambda)$ for $k\in\{1,2,\ldots,n-1\}$ be given by \eqref{sum0} and \eqref{sumk}, respectively. Additionally, let $K_0(\pm\lambda)$ be as in \eqref{nKmax0} and \eqref{nKmax0-neg}, respectively. Then, \eqref{third} with \eqref{pqr4} has Hyers-Ulam stability on $\T$, with Hyers-Ulam stability constant $K=(K_0(\lambda))^2K_0(-\lambda)$.
\end{theorem}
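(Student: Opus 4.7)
The plan is to parallel the proof of Theorem \ref{thm5.4}, exploiting the fact that the only structural change from \eqref{pqr3} to \eqref{pqr4} is the sign flip $p(t) = \lambda(t+2h) \mapsto p(t) = -\lambda(t+2h)$, together with the compensating adjustment in $r(t)$. This sign flip replaces the intermediate first-order equation $\Delta_h \psi + \lambda\psi = 0$ (which in Theorem \ref{thm5.4} forces the use of Theorem \ref{thm2.5}) by its counterpart $\Delta_h \psi - \lambda\psi = 0$, namely \eqref{neq} itself, so Theorem \ref{thm2.3} applies and contributes a factor $K_0(\lambda)$ in place of $K_0(-\lambda)$. The rest of the reduction chain is then identical in shape to that of Theorem \ref{thm5.4}, which explains why the final constant becomes $(K_0(\lambda))^2 K_0(-\lambda)$.

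Concretely, I would set
\[ \psi(t) := \Delta^2_h \xi(t-2h) + [\Delta_h\lambda(t-2h) - \lambda(t-2h)\lambda(t-h)]\xi(t-2h) \]
for $t \in \T\setminus\{0,h\}$, and verify by direct expansion, using the discrete product rule $\Delta_h(fg)(t) = (\Delta_h f)(t) g(t) + f(t+h)\Delta_h g(t)$ twice, that
\[ \Delta_h \psi(t+2h) - \lambda(t+2h)\psi(t+2h) = \Delta^3_h \xi(t) + p(t)\Delta^2_h \xi(t) + q(t)\Delta_h\xi(t) + r(t)\xi(t) \]
with $p, q, r$ taken from \eqref{pqr4}. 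Shifting the variable back yields $|\Delta_h\psi(t) - \lambda(t)\psi(t)| \le \varepsilon$ on $\T\setminus\{0,h\}$, so by Theorem \ref{thm2.3} there is a solution $x$ of \eqref{neq} with $|\psi(t)-x(t)| \le K_0(\lambda)\varepsilon =: \tilde\varepsilon$. Substituting the definition of $\psi$ and shifting then gives
\[ \bigl|\Delta^2_h \xi(t) + [\Delta_h\lambda(t) - \lambda(t)\lambda(t+h)]\xi(t) - x(t+2h)\bigr| \le \tilde\varepsilon, \qquad t\in\T. \]

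Next, invoking Theorem \ref{thm3.4} with $f(t) = x(t+2h)$ produces a solution $\eta$ of $\Delta^2_h\eta(t) + [\Delta_h\lambda(t) - \lambda(t)\lambda(t+h)]\eta(t) = x(t+2h)$ satisfying $|\xi(t)-\eta(t)| \le K_0(\lambda)K_0(-\lambda)\tilde\varepsilon = (K_0(\lambda))^2 K_0(-\lambda)\varepsilon$. To finish, I would verify that this $\eta$ actually solves \eqref{third} with coefficients \eqref{pqr4}: apply $\Delta_h$ to the identity $\Delta^2_h\eta + [\Delta_h\lambda - \lambda(\cdot)\lambda(\cdot+h)]\eta = x(\cdot+2h)$ and use $\Delta_h x(t+2h) = \lambda(t+2h) x(t+2h)$ to see that the combination $\Delta_h[x(t+2h)] - \lambda(t+2h) x(t+2h)$ vanishes, while on the $\eta$-side it expands into exactly the third-order operator with \eqref{pqr4}. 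This is the very identity from the opening step, now with $\eta$ replacing $\xi$ and equality replacing the perturbation inequality.

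The main obstacle is purely the algebraic identity in the opening step: careful bookkeeping among the shifts $\lambda(t), \lambda(t+h), \lambda(t+2h), \Delta_h\lambda(t), \Delta_h\lambda(t+h), \Delta^2_h\lambda(t)$ is essential to arrive at the precise asymmetric expression for $r(t)$ in \eqref{pqr4} (note that, unlike in \eqref{pqr3}, the $\Delta_h\lambda(t)$ term now carries the factor $\lambda(t+h) + \lambda(t+2h)$, reflecting an additional cross-term produced by the sign flip in $p$). Once this identity is confirmed, the three-level stability cascade \textemdash\ Theorem \ref{thm2.3} at the outer shell, Theorem \ref{thm3.4} at the inner shell, and final verification of the equation \textemdash\ runs exactly as in Theorems \ref{thm5.2}\textendash\ref{thm5.4}.
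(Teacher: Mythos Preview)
Your proposal is correct and matches the paper's own proof essentially line for line: the paper's proof of Theorem~\ref{thm5.5} is in fact just the one-sentence sketch ``use the method of Theorem~\ref{thm5.4}, now forming $\Delta_h\psi(t+2h)-\lambda(t+2h)\psi(t+2h)$ with the same $\psi$,'' and you have expanded exactly this into the full three-step cascade (Theorem~\ref{thm2.3} for the outer first-order shell, Theorem~\ref{thm3.4} for the inner second-order shell, then verification that $\eta$ solves \eqref{third} with \eqref{pqr4}). Your observation that the sign flip in $p$ replaces the appeal to Theorem~\ref{thm2.5} by Theorem~\ref{thm2.3}, hence swaps one factor $K_0(-\lambda)$ for $K_0(\lambda)$, is precisely the mechanism the paper has in mind.
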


\begin{proof}
Theorem \ref{thm5.5} is proved using the same method as Theorem \ref{thm5.4}. Note that we now have to create an expression of the form $\Delta_h \psi(t+2h) - \lambda(t+2h)\psi(t+2h)$, where
\[ \psi(t):=\Delta^2_h\xi(t-2h)+[\Delta_h\lambda(t-2h)-\lambda(t-2h)\lambda(t-h)]\xi(t-2h). \]
This will complete the proof in an analogous manner.
\end{proof}

% Example 5.6: Two-Cycle %

\begin{example}
Fix step size $h>0$ with $h \ne \frac{1}{\pi}$, $\frac{1}{2\pi}$, $\frac{2}{3\pi}$. Now we consider \eqref{third} with \eqref{pqr} on $\T$. Set
$$ \lambda(t) = \begin{cases} \pi &\text{if}\quad \frac{t}{h}\equiv 0 \mod 2, \\ 2\pi &\text{if}\quad \frac{t}{h}\equiv 1 \mod 2.
\end{cases} $$ 
Then, $\lambda$ is a 2-cycle on $\T$, with $\lambda_0=\pi$ and $\lambda_1=2\pi$, and
\[ \begin{split}
  p(t) &= \lambda(t+2h) = \lambda(t), \\
  q(t) &= 2\Delta_h \lambda(t+h) +\Delta_h \lambda(t+2h) - \lambda(t+2h)\lambda(t+3h) \\
       &= 2\Delta_h \lambda(t+h) +\Delta_h \lambda(t) - \lambda(t)\lambda(t+h) \\
       &= \begin{cases} -\frac{\pi}{h}-2\pi^2 &\text{if}\quad \frac{t}{h}\equiv 0 \mod 2, \\ \frac{\pi}{h}-2\pi^2 &\text{if}\quad \frac{t}{h}\equiv 1 \mod 2,
\end{cases} \\
  r(t) &= \Delta^2_h \lambda(t) + \lambda(t)\Delta_h \lambda(t+2h) - \lambda(t)\lambda(t+2h)\lambda(t+3h) \\
       &= \Delta^2_h \lambda(t) + \lambda(t)\Delta_h \lambda(t) - \lambda^2(t)\lambda(t+h) \\
       &= \begin{cases} -\frac{2\pi}{h^2}+\frac{\pi^2}{h}-2\pi^3 &\text{if}\quad \frac{t}{h}\equiv 0 \mod 2, \\ \frac{4\pi}{h^2}-\frac{2\pi^2}{h}-4\pi^3 &\text{if}\quad \frac{t}{h}\equiv 1 \mod 2.
\end{cases} \\
 \end{split} \]
From Example \ref{Two-Cycle} with $A=\pi$ and $B=2\pi$, we have
\begin{center}
\begin{tabular}{lclllcl}
 $S_0(\lambda)$ & = & $\frac{1}{1+h\pi}+\frac{1}{(1+h\pi)(1+2h\pi)}$, & & $S_0(-\lambda)$ &=& $\frac{1}{|1-h\pi|}+\frac{1}{|1-h\pi| |1-2h\pi|}$, \\
 $S_1(\lambda)$ & = & $\frac{1}{1+2h\pi}+\frac{1}{(1+h\pi)(1+2h\pi)}$, & & $S_1(-\lambda)$ &=& $\frac{1}{|1-2h\pi|}+\frac{1}{|1-h\pi| |1-2h\pi|}$,
\end{tabular}
\end{center}
and
\begin{eqnarray*}
 e_{\pm\lambda}(2h) &=& (1\pm h\pi)(1\pm 2h\pi).
\end{eqnarray*}
Moreover, we see that
\begin{align*} 
\max\left\{S_0(\lambda), S_1(\lambda)\right\} 
 &=S_0(\lambda), \\
\max\left\{S_0(-\lambda), S_1(-\lambda)\right\} 
 &=\begin{cases}
	S_1(-\lambda) &\text{if}\quad h\in\left(0,\frac{1}{2\pi}\right)\cup\left(\frac{1}{2\pi},\frac{2}{3\pi}\right), \\
	S_0(-\lambda) &\text{if}\quad h\in\left(\frac{2}{3\pi},\frac{1}{\pi}\right)\cup\left(\frac{1}{\pi},\infty\right). \\\end{cases} 
\end{align*}
Employing Theorem \ref{thm5.2} for step size $h>0$ with $h \ne \frac{1}{\pi}$, $\frac{1}{2\pi}$, $\frac{2}{3\pi}$, equation \eqref{third} with \eqref{pqr} has Hyers-Ulam stability on $\T$, with Hyers-Ulam stability constant $K=K_0(\lambda)(K_0(-\lambda))^2$, where from \eqref{nKmax0} and \eqref{nKmax0-neg}
$$ K_0(\lambda):=\frac{h|e_{\lambda}(2h)|}{\left|1-|e_{\lambda}(2h)|\right|}\max\left\{S_0(\lambda), S_1(\lambda)\right\} = \frac{2h(1+h\pi)}{(1+h\pi)(1+2h\pi)-1}  = \frac{2(1+h\pi)}{\pi(3+2h\pi)} , $$
and
\begin{align*}
 K_0(-\lambda) &:=\frac{h|e_{-\lambda}(2h)|}{\left|1-|e_{-\lambda}(2h)|\right|}\max\left\{S_0(-\lambda), S_1(-\lambda)\right\} = \begin{cases}
	\frac{h(2-h\pi)}{\left|1-|e_{-\lambda}(2h)|\right|} &\text{if}\quad h\in\left(0,\frac{1}{2\pi}\right)\cup\left(\frac{1}{2\pi},\frac{2}{3\pi}\right), \\
	\frac{2h^2\pi}{\left|1-|e_{-\lambda}(2h)|\right|} &\text{if}\quad h\in\left(\frac{2}{3\pi},\frac{1}{\pi}\right)\cup\left(\frac{1}{\pi},\infty\right)
  \end{cases}\\
  &= \begin{cases}
	\frac{2-h\pi}{\pi(3-2h\pi)} &\text{if}\quad h\in\left(0,\frac{1}{2\pi}\right), \\
        \frac{h(2-h\pi)}{2h^2\pi^2-3h\pi+2} &\text{if}\quad h\in\left(\frac{1}{2\pi},\frac{1}{\pi}\right), \\
        \frac{2-h\pi}{\pi(3-2h\pi)} &\text{if}\quad h\in\left(\frac{1}{\pi},\frac{3}{2\pi}\right), \\
	\frac{2h}{2h\pi-3} &\text{if}\quad h\in\left(\frac{2}{3\pi},\infty\right). \\\end{cases}
\end{align*}
To be precise, Hyers-Ulam stability constant $K=K_0(\lambda)(K_0(-\lambda))^2$ is given by
\[ K = \begin{cases}
	\frac{2(1+h\pi)(2-h\pi)^2}{\pi^3(3+2h\pi)(3-2h\pi)^2} &\text{if}\quad h\in\left(0,\frac{1}{2\pi}\right), \\
        \frac{2h^2(1+h\pi)(2-h\pi)^2}{\pi(3+2h\pi)\left(2h^2\pi^2-3h\pi+2\right)^2} &\text{if}\quad h\in\left(\frac{1}{2\pi},\frac{1}{\pi}\right), \\
        \frac{2(1+h\pi)(2-h\pi)^2}{\pi^3(3+2h\pi)(3-2h\pi)^2} &\text{if}\quad h\in\left(\frac{1}{\pi},\frac{3}{2\pi}\right), \\
	\frac{8h^2(1+h\pi)}{(3+2h\pi)(3-2h\pi)^2} &\text{if}\quad h\in\left(\frac{2}{3\pi},\infty\right). \\\end{cases}
\]
This ends the final example.
\end{example}

\section{Conclusion}
In this work, we utilized previous results on first-order linear $h$-difference equations with a periodic coefficient to establish the Hyers-Ulam stability, and find a Hyers-Ulam constant, for a linear second-order Hill-type $h$-difference equation with a periodic coefficient of a particular form. Specifically, if the discrete exponential function is not of modulus one when evaluated at $nh$, where $n\in\N$ is the period and $h>0$ is the step size, then the equation is Hyers-Ulam stable. We also provide detailed information on the Hyers-Ulam constant. Building on this, the Hyers-Ulam stability of several related third-order $h$-difference equations is established. Examples are provided for both the second- and third-order cases. 

\section*{Acknowledgements}
The second author was supported by JSPS KAKENHI Grant Number JP20K03668.

%%%%%%%%%%%%%%%% 
% Bibliography % 
%%%%%%%%%%%%%%%%

\end{document}